\setlist[enumerate]{parsep=0pt plus 4pt,topsep=0pt plus 4pt}
\newcommand\excise[1]{}
\newtheorem{theorem}{Theorem}
\newtheorem{thm}{Theorem}[section]
\newtheorem{lemma}[thm]{Lemma}
\newtheorem{prop}[thm]{Proposition}
\newtheorem{cor}[thm]{Corollary}
\newtheorem*{claim*}{Claim}
\theoremstyle{definition}
\newtheorem{defn}[thm]{Definition}
\newtheorem{remark}[thm]{Remark}
\newcounter{separated-sec}
\newcounter{collapse-sec}
\newcommand{\Ring}[1]{\ensuremath{\mathbb{#1}}}
\renewcommand\>{\rangle}
\newcommand\<{\langle}
\newcommand\EE{\mathbb{E}}
\newcommand\MM{\mathcal{M}}
\newcommand\NN{\Ring{N}}
\newcommand{\RR}{\Ring{R}}
\newcommand{\dd}{\mathbf{d}}
\newcommand{\dtan}{d}
\newcommand\xx{\mathbf{x}}
\newcommand\bP{\mathbf{P}}
\newcommand\cB{\mathcal{B}}
\newcommand\cC{\mathcal{C}}
\newcommand\cF{\mathcal{F}}
\newcommand\ve{\varepsilon}
\newcommand\bmu{{\bar\mu}}
\newcommand\hmu{{\widehat\mu}}
\newcommand\Gn{G_n}
\newcommand\Emu{E_\mu}
\newcommand\Fmu{F}
\newcommand\Smu{S_\bmu\MM}
\newcommand\SpM{S_p\MM}
\newcommand\Tmu{T_\bmu\MM}
\newcommand\TpM{T_p\MM}
\newcommand\bgn{\ol g_n}
\newcommand\ccdot{\;\cdot\;}
\newcommand\nablabmu{\nabla_{\hspace{-.25ex}\bmu}}
\newcommand\ol[1]{{\overline{#1}}}
\newcommand\wh[1]{{\widehat{#1}}}
\newcommand\interior[1]{{\kern0pt#1}^{\mathrm{o}}}
\newcommand\edits[1][]{}
\DeclareMathOperator*\argmin{argmin}
\DeclareMathOperator\CAT{CAT}
\DeclareMathOperator\tangCOV{\Sigma}
\DeclarePairedDelimiter\abs{\lvert}{\rvert}
\DeclarePairedDelimiter\norm{\lVert}{\rVert}
\newcommand\noheight[1]{\raisebox{0pt}[0pt][0pt]{#1}}
\definecolor{teal}{HTML}{029386}
\begin{document}

\mbox{}\vspace{-4ex}
\title[CLT for random tangent fields on stratified spaces]%
{A central limit theorem for random tangent fields on stratified spaces}
\author{Jonathan C. Mattingly}
\address{\rm Departments of Mathematics and of Statistical Sciences,
Duke University, Durham, NC 27708}
\urladdr{\url{https://scholars.duke.edu/person/jonathan.mattingly}\vspace{-1ex}}
\author{Ezra Miller}
\address{\rm Departments of Mathematics and of Statistical Sciences,
Duke University, Durham, NC 27708}
\urladdr{\url{https://scholars.duke.edu/person/ezra.miller}\vspace{-1ex}}
\author[Do Tran]{Do Tran}%
\address{\rm Georg-August Universit\"at at G\"ottingen, Germany\vspace{-1ex}}

\makeatletter
  \@namedef{subjclassname@2020}{\textup{2020} Mathematics Subject Classification}
\makeatother
\subjclass[2020]{Primary: 60F05, 53C23, 60D05, 60B05, 49J52, 62R20,
62R07, 57N80, 58A35, 58Z05, 53C80, 62G20, 62R30, 28C99;
Secondary: 58K30, 57R57, 92B10}
%
%
\keywords{Central Limit Theorem; random tangent field; barycenter
(Fr\'echet mean); CAT(k) space; stratified space}

\date{2 January 2025}

\begin{abstract}
Variation of empirical Fr\'echet means on a metric space with
curvature bounded above is encoded via random fields indexed by unit
tangent vectors.  A~central limit theorem shows these random tangent
fields converge to a Gaussian such~field and lays the foundation for
more traditionally formulated central limit theorems in
subsequent work.
\end{abstract}

\maketitle
\tableofcontents

\section*{Introduction}\label{s:intro}

\noindent%
The classical central limit theorem (CLT) captures the distributional
form of the leading order fluctuation of a sample mean around its
limiting value, namely the true or population mean.  After rescaling,
these fluctuations typically converge to a Gaussian distribution.
When the samples are drawn from a linear space, such as a flat vector
space, where addition of elements is well defined, both the sample and
population means are given by the classical average.  These classical
averages can also be identified as the minimizers of a least-squared
error problem involving the empirical and population distribution,
which largely explains the centrality of averages in asymptotic
problems.  While averages do not translate to nonlinear spaces, since
geometry precludes simple analogues of arithmetic averages, the
minimization formulation does translate, as it requires only a metric.
Minimizers of the \emph{Fr\'echet function}
\begin{equation}\label{eq:FrechetF}
  F_\mu(p) = \frac12\int_\MM d(p,x)^2\,\mu(dx)
\end{equation}
of the given measure~$\mu$ on a space~$\MM$, known as barycenters,
play the central role of the \emph{Fr\'echet mean} of the
distribution~$\mu$.  Furthermore, variation of the empirical
barycenter from the \mbox{population} barycenter~$\bmu$ is informed by
the geometry of~$\MM$ and is precisely the object, once rescaled, to
be described via the CLT.

In settings where the geometric structure of $\MM$ is
smooth~\cite{bhattacharya-patrangenaru2005}, spatial variation in
$\MM$ maps invertibly to the tangent space, at least locally near the
population mean~$\bmu$. This allows one to consider the limit of the
scaled fluctuations in the tangent space and then push them back down
to the base space $\MM$.
However, advances in geometric statistics have uncovered the need to
work with singular spaces~$\MM$ (see \cite{shadow-geom}, especially
sources cited in the Introduction there) that may not have a smooth
geometric structure.  These spaces typically have singular points,
whose tangent spaces are cones rather than flat, Euclidian vector
spaces as in the smooth setting.  In the absence of smoothness, it is
impossible to transform tangent data into a linear space invertibly.
%
%
Instead of taking into account all spatial variation of empirical
means around~$\bmu$, our approach here reduces the singular situation
to the linear case by treating variation around the population mean
radially---that is, ray by ray.  The result is a potent interpretation
of the CLT as convergence of real-valued random fields indexed by the
unit sphere (Theorem~\ref{t:tangent-field-CLT}).  This perspective
makes sense even when the underlying space $\MM$ is neither smooth nor
linear.  The sequel \cite{escape-vectors} to this paper uses the
results of this note on CLTs for random tangent fields to prove a more
classical-looking CLT for full spatial variation of empirical means
in~$\MM$ around~$\bmu$.

To see how this proceeds, and to highlight connections with the
classical case, begin by returning to a linear setting.  \edits{Simplify
notation by writing $F_\mu = \Fmu$ for the Fr\'echet function and $F_n
= F_{\mu_n}$ for the \emph{empirical Fr\'echet function} of the empirical measure $\mu_n=\frac1n\sum_k \delta_{X_k}$ corresponding to i.i.d.\
sample of random points~$X_1,\dots,X_n$~from~$\mu$, so}
\begin{align*}
  \notag
F_n(p)
  & = \frac 12 \int |p-x|^2 \mu_n(dx)
    = \frac 12 \int \bigl(|p|^2 - 2 p\cdot x + |x|^2\bigr) \mu_n(dx)
\\\notag
  & = \frac{|p|^2}2 - p \cdot \bmu + \!\!\int\! \frac{|x|^2}2\mu(dx)
      - \frac 1n \sum_{i=1}^n (p\cdot X_i - p\cdot \bmu)
      + \frac 1n \sum_{i=1}^n \frac{|X_i|^2}2 - \!\!\int\! \frac{|x|^2}2\mu(dx)
\\
  & = \Fmu(p) - \bgn(p) + C_n,
\end{align*}
where
$$
  \bgn(p) =\frac1n \sum_{i=1}^n (p\cdot X_i - p\cdot \bmu)
  \quad\text{and}\quad
  C_n = \frac 1{2n} \sum_{i=1}^n |X_i|^2 - \frac 12 \int |x|^2\mu(dx).
$$
Since $C_n$ is a constant that is independent of~$p$, depending only
on~$\mu_n$ and~$\mu$, minimizing~$F_n(p)$ coincides with minimizing
$\Fmu(p) - \bgn(p)$.  Hence $\sqrt n\, (\bmu_n - \bmu)$ becomes
$$
  \sqrt n \Bigl(\argmin_p F_n(p) - \argmin_p \Fmu(p)\Bigr)
  =
  \sqrt n \Bigl(\argmin_p \bigl(\Fmu(p)
    - \bgn(p)\bigr)
    - \argmin_p \Fmu(p)\Bigr).
$$
Since the setting is linear for the moment, the coordinate system can
be translated so that $\bmu$ is the origin.  Then this equation
simplifies, and the usual linear CLT becomes
\begin{align}\label{eq:recast:1}
  \sqrt n\,(\bmu_n - \bmu)
  \overset{d}=
  \lim_{n\to\infty} \sqrt n \argmin_p \bigl(\Fmu(p) - \bgn(p)\bigr).
\end{align}

This phrasing shifts perspective from a spatial view---minimize the
sum of square distances to the sample points---to an angular view:
maximize the sum of inner products with the sample points without
unduly increasing the Fr\'echet function of the population measure.
The task of a CLT becomes comparing these two
quantities~asymptotically.  Asymptotics of the right-hand side
of~\eqref{eq:recast:1} can be understood by first getting a handle on
convergence of the average functions~$\bgn$ and then passing that
perturbation through the minimization problem in~\eqref{eq:recast:1}.
Before expanding on that, the first step is to identify an appropriate
setting where $\MM$ is not linear.

As in the smooth case, any singular CLT naturally occurs tangentially
at the Fr\'echet mean~$\bmu$.  For this to occur, the space~$\MM$ must
possess a space of tangent data with enough structure.  To that end
the setting here is $\CAT(\kappa)$ spaces
(Section~\ref{s:random-tangent-field}), a rather general class of
metric spaces in which each point $p \in \MM$ has a tangent
cone~$\TpM$.  $\CAT(\kappa)$ spaces have enough structure to define an
angular pairing that serves as a
replacement for the inner product in a linear~space.

Using a $\CAT(\kappa)$ space~$\MM$ and its tangent cone~$\Tmu$ at the
Fr\'echet mean, the CLT~\eqref{eq:recast:1} is recast as an expression
for the distribution of the limit
\begin{equation}\label{eq:coneCLT}
  \lim_{n\to\infty} \sqrt n \argmin_{V \in \Tmu}
    \bigl(\Fmu(\exp_\bmu V) - \bgn(V)\bigr)
\end{equation}
on~$T_\bmu$, where now
$\bgn$ takes place in the tangent cone instead of~$\MM$ itself, so
$X_i = \log_\bmu x_i$ for~$x_i \sim \mu$:
\begin{equation}\label{eq:inner-products}
\begin{split}
\bgn(V)&= \frac 1n \sum_{i=1}^n \bigl(\<V, X_i\>_\bmu - m(V)\bigr)
\\\quad\text{and}\quad
   m(V)&= \int_\MM \<V,\log_\bmu x\>_\bmu\,\mu(dx)
	  =
	  \int_{\Tmu} \<V,U\>_\bmu\,\,\hmu(dU),
\end{split}
\end{equation}
where $\hmu = (\log_\bmu)_\sharp\mu$ is the pushforward of the
measure~$\mu$ in~$\MM$ to the cone~$\Tmu$.  The random real-valued
random field~$\bgn$, indexed by the tangent cone~$\Tmu$, and its
convergence are the principal topics of this paper.  Motivated by
these formulas, random tangent fields are defined in
Section~\ref{s:random-tangent-field}.  Crucially, even though $\Tmu$
is not a vector space when $\bmu$ is a singular point, the conical
geometry of~$T_\bmu$ (Definition~\ref{d:tangent-cone}) has enough
structure for angles and nonnegative scaling to be defined in~$\Tmu$.
This allows the definition of an angular pairing that plays the role
of inner products in~\eqref{eq:inner-products} when the setting is
singular (Definition~\ref{d:inner-product}).

Shifting to the formulation~\eqref{eq:coneCLT} has a decisive
advantage.  Since the tangent cone $\Tmu$ need not be a vector space,
classical CLTs do not apply to collections of independent random
variables taking values in~$\Tmu$.  However, classical CLTs do apply
to collections of real-valued random fields indexed by the unit sphere
$\Smu$ in~$\Tmu$, as they can be viewed as a vector space.  And $\bgn$
in~\eqref{eq:inner-products} can be viewed as a real-valued random
field indexed by~$\Smu$ which is the average of the $n$ independent
random fields $V \mapsto \<X_k, V\>_\bmu - m(V)$.  This change in
perspective from CLT as convergence of random variables to CLT as
convergence of random fields is a powerful theoretical advance that
has added value even in classical~cases; see
\cite[Remark~6.40]{escape-vectors}.

In more detail, since each of the $n$ fields averaged in~$\bgn$ is
centered and independent, an appropriate CLT for random fields yields
convergence of $\sqrt n\,\bgn$---again,~viewed as a random field---to
a Gaussian random field~$G$; this is the main
Theorem~\ref{t:tangent-field-CLT}.  This Gaussian random field has
mean zero and covariance $\EE[G(V) G(U)] = \Sigma(V,U)$, where
\begin{equation*}\label{eq:Rcov}
\begin{split}
  \Sigma: \Tmu \times \Tmu
  & \to \RR \quad\text{with}\quad
  (U,V)  \mapsto  \Sigma(U,V):=\int_{\Tmu} \<U,y\>_\bmu\<V,y\>_\bmu\, \hmu\: dy.
\end{split}
\end{equation*}

The convergence in Theorem~\ref{t:tangent-field-CLT} addresses what
happens inside the $\argmin$ from~(\ref{eq:coneCLT}), but it falls
short of relating the limit~(\ref{eq:coneCLT}) to asymptotic spatial
variation of empirical Fr\'echet means \noheight{$\lim_{n\to\infty}
\sqrt n \log_\bmu \bmu_n$} in analogy with~(\ref{eq:recast:1}).  That
is treated in subsequent work \cite{escape-vectors}, which pushes this
convergence to the tangent cone as a whole, rather than ray by~ray,
using tangential collapse \cite{tangential-collapse} and additional
theory surrounding variation of Fr\'echet means on a singular
space~$\MM$ upon perturbation of the measure~$\mu$.

\subsection*{Acknowledgements}
\noindent
DT was partially funded by DFG HU 1575/7.  The other two authors were
informed of \cite{sticky-flavors-2023}, which has related results, the
day before posting this paper.  JCM thanks the NSF RTG grant
DMS-2038056 for general support as well as Davar Khoshnevisan and
Sandra Cerrai for enlightening conversations.  The referees provided
extremely helpful, perceptive comments that substantively improved
the paper.%

\numberwithin{equation}{section}

\section{Prerequisites}\label{s:prereqs}

\noindent
To keep this paper self-contained, recall some basic notions regarding
$\CAT(\kappa)$ spaces and measures on them from the prequel
\cite{tangential-collapse}; more details, proofs, background, and
examples can be found there and in the sources it cites (notably
\cite{BBI01} and \cite{shadow-geom}).

\edits{Throughout the paper, fix a complete locally compact
$\CAT(\kappa)$ metric space~$(\MM,\dd)$.}  Many of the definitions fix
an arbitrary point $\bmu \in \MM$.  In practice, this point is always
the Fr\'echet mean of a measure~$\mu$ in this paper, but it does no
harm to call the point~$\bmu$ even without the presence of a
measure~$\mu$.

\begin{defn}\label{d:angle}\rm
The \emph{angle} between geodesics $\gamma_i\colon [0,\ve_i) \to \MM$ for
$i = 1,2$ emanating from~$\bmu$ in the $\CAT(\kappa)$ space~$(\MM,\dd)$
and parametrized by arclength is characterized by
$$
  \cos\bigl(\angle(\gamma_1,\gamma_2)\bigr)
  =
  \lim_{t,s\to 0}\frac{s^2+t^2-\dd^2(\gamma_1(s),\gamma_2(t))}{2st}.
$$
\edits{Clearly, $\angle(\gamma_1,\gamma_2) \in [0,\pi]$.} The geodesics~$\gamma_i$ are \emph{equivalent} if the angle between
them is~$0$.  \edits{The completion} $\Smu$
\edits{of} the set of equivalence classes is the \emph{space
of directions} at~$\bmu$.
\end{defn}

\begin{lemma}\label{l:angular-metric}
The space of directions is a length space whose \emph{angular
metric}~$\dd_s$~satisfies
$$
  \dd_s(V, W) = \angle(V, W)
  \text{ whenever } V, W \in \Smu
  \text{ with } \angle(V, W) < \pi
$$
\end{lemma}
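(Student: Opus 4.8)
The plan is to take $\dd_s$ to be the length metric that the angle induces on $\Smu$: for $V,W\in\Smu$ let $\dd_s(V,W)$ be the infimum of the $\angle$-lengths of continuous paths in $\Smu$ from~$V$ to~$W$ (with $\inf\emptyset=\infty$). Two ingredients are then essentially free. First, the Alexandrov angle on germs of geodesics issuing from~$\bmu$ in a $\CAT(\kappa)$ space satisfies the triangle inequality \cite{BBI01}, so $\angle$ is a genuine metric on the equivalence classes of Definition~\ref{d:angle} and extends over their completion $\Smu$; consequently the $\angle$-length of any path is at least the $\angle$-distance of its endpoints, which gives $\dd_s\ge\angle$ on all of $\Smu$. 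Second, the length metric induced by any metric is intrinsic (lengths of paths are unchanged on passing from a metric to its induced length metric), so $(\Smu,\dd_s)$ is automatically a length space. Everything therefore comes down to the reverse inequality $\dd_s(V,W)\le\theta$ when $\theta:=\angle(V,W)<\pi$, i.e.\ to producing paths of $\angle$-length close to $\theta$.

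To build such paths, choose unit-speed geodesics $\gamma_V,\gamma_W$ from~$\bmu$ with directions $V,W$, and for small $r>0$ let $\sigma_r\colon[0,1]\to\MM$ be the constant-speed geodesic from $\gamma_V(r)$ to $\gamma_W(r)$ (unique, since these points lie in a small convex ball about~$\bmu$). The triangle $\bmu\,\gamma_V(r)\,\gamma_W(r)$ has both legs of length exactly~$r$ and, by Definition~\ref{d:angle}, base $\dd(\gamma_V(r),\gamma_W(r))=2r\sin(\theta/2)+o(r)$; since $\theta<\pi$ this is a nondegenerate triangle after rescaling by~$1/r$, so $\dd(\bmu,\sigma_r)=r\cos(\theta/2)+o(r)>0$ and in particular $\bmu\notin\sigma_r([0,1])$. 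Now define $c_r\colon[0,1]\to\Smu$ by letting $c_r(t)$ be the direction at~$\bmu$ of the geodesic $[\bmu,\sigma_r(t)]$. Its endpoints are $c_r(0)=V$ and $c_r(1)=W$ because $[\bmu,\gamma_V(r)]$ is an initial segment of~$\gamma_V$, and $c_r$ is continuous because the $\CAT(\kappa)$ comparison $\angle_\bmu(\sigma_r(t),\sigma_r(t'))\le\wt\angle_\bmu(\sigma_r(t),\sigma_r(t'))$ (with $\wt\angle_\bmu$ the $M_\kappa^2$-comparison angle) forces that angle to zero as $t'\to t$, using $\dd(\bmu,\sigma_r)>0$.

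For the length, telescope over a partition $0=t_0<\dots<t_N=1$ and apply $\angle_\bmu\le\wt\angle_\bmu$ once more: the $\angle$-length of $c_r$ is bounded by $\sup$ over partitions of $\sum_i\wt\angle_\bmu(\sigma_r(t_i),\sigma_r(t_{i+1}))$. Feeding the $M_\kappa^2$ law of cosines into the thin triangles $\bmu\,\sigma_r(t_i)\,\sigma_r(t_{i+1})$ and passing to the fine-partition limit turns this into the total angle that $\sigma_r$ subtends at~$\bmu$ in the $1/r$-rescaled, hence nearly Euclidean, picture, and the nondegenerate isosceles triangle above makes that angle $\theta+o(1)$ as $r\to0$. (Equivalently: by the first-variation formula $\tfrac{d}{dt}\dd(\bmu,\sigma_r(t))=-|\dot\sigma_r|\cos\alpha(t)$ and the angular-speed bound $|\dot c_r|(t)\le|\dot\sigma_r|\sin\alpha(t)/\dd(\bmu,\sigma_r(t))$, with $\alpha(t)$ the angle at $\sigma_r(t)$ between $\sigma_r$ and $[\sigma_r(t),\bmu]$, one obtains the polar-coordinate equations of a straight segment seen from~$\bmu$, whose swept angle is $\theta+o(1)$.) Hence $\dd_s(V,W)\le\inf_r\ell(c_r)\le\theta$, which with $\dd_s\ge\angle$ finishes the proof.

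The step I expect to be the main obstacle is precisely this length estimate. One has to keep the ``thin triangles are nearly Euclidean'' approximation uniform in the partition so that the aggregate error is genuinely $o(1)$, and one needs $\dd(\bmu,\sigma_r)\sim r\cos(\theta/2)$ rather than merely $\le$ that quantity: the $\CAT(\kappa)$ inequality alone controls $\dd(\bmu,\sigma_r)$ only from above, and the matching lower bound---equivalently, the statement that $\sigma_r$ stays a definite distance from~$\bmu$---is where the Gromov--Hausdorff convergence of the rescalings $(\MM,\tfrac1r\dd,\bmu)$ to the Euclidean cone over $\Smu$, as developed in \cite{tangential-collapse}, is really used. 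A shortcut avoiding the computation altogether is to invoke that the space of directions of a $\CAT(\kappa)$ space is a $\CAT(1)$ space, so any two of its points at distance $<\pi$ are joined by an $\angle$-geodesic of exactly that length, giving $\dd_s\le\angle$ below~$\pi$ immediately; but that draws on more than Definition~\ref{d:angle} provides.
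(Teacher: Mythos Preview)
The paper does not actually prove this lemma: its entire proof is a citation to \cite[Proposition~1.7]{shadow-geom}, which in turn points to \cite[Lemma~9.1.39]{BBI01}. So there is no in-paper argument to compare against, only the literature it defers to.

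Your direct approach---connecting $\gamma_V(r)$ to $\gamma_W(r)$ by a chord $\sigma_r$, radially projecting it to a path $c_r$ in~$\Smu$, and bounding the $\angle$-length of~$c_r$---is the classical one, and is in the spirit of what the cited references do. You have also correctly isolated the real difficulty. Bounding each increment $\angle_\bmu(\sigma_r(t_i),\sigma_r(t_{i+1}))$ by the comparison angle $\tilde\angle_\bmu$ is fine, but summing the comparison angles to get~$\theta+o(1)$ requires a \emph{lower} bound $\dd(\bmu,\sigma_r(t))\gtrsim r$ uniformly in~$t$, and the $\CAT(\kappa)$ inequality by itself only gives the upper bound. (For curvature bounded \emph{below} the comparison reverses and the argument is immediate; for curvature bounded above it is not.) The triangle inequality alone gives $\dd(\bmu,\sigma_r(t))\geq r-t\,\dd(\gamma_V(r),\gamma_W(r))$, which suffices only when $\theta$ is bounded away from~$\pi$, so some further input is genuinely needed near~$\pi$.

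Your two proposed completions are both legitimate. The Gromov--Hausdorff convergence of $(\MM,\tfrac1r\dd,\bmu)$ to the Euclidean cone over~$\Smu$ does supply the missing two-sided control on $\dd(\bmu,\sigma_r(\cdot))$. And your ``shortcut''---that $\Smu$ is $\CAT(1)$, so points at angular distance $<\pi$ are joined by a geodesic of exactly that length---is not really a shortcut so much as the theorem of Nikolaev that underlies the cited results; invoking it is precisely what the paper does by citing \cite{BBI01}. In short, your sketch is sound and your self-diagnosis of its gap is accurate; closing that gap cleanly amounts to reproving a piece of the literature the paper is content to quote.
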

\begin{proof}
This is \cite[Proposition~1.7]{shadow-geom}, which in turn is
\cite[Lemma~9.1.39]{BBI01}.
\end{proof}

\begin{defn}\label{d:tangent-cone}\rm
At any point~$\bmu$ the $\CAT(\kappa)$ space~$\MM$ has \emph{tangent
cone}
$$
  \Tmu = \Smu \times [0,\infty) / \bigl(\SpM \times \{0\}\bigr),
$$
defined in the standard way as a topological quotient of the cylinder
over~$\Smu$, whose apex
is also called~$\bmu$.  The \emph{length} $\|V\|$ of a tangent
vector~$V$ is its $[0,\infty)$ coordinate.  The \emph{unit tangent
sphere} of length~$1$ vectors is identified with~$\Smu$, the space
of~directions.
\end{defn}

\begin{defn}\label{d:inner-product}\rm
Tangent vectors $V, W \in \Tmu$ at any point~$\bmu$ have
\emph{angular pairing}
$$
  \<V,W\>_\bmu = \|V\|\|W\| \cos\bigl(\angle(V,W)\bigr).
$$
The subscript $\bmu$ is suppressed when the point~$\bmu$ is clear from
context.
The \emph{conical metric} on the tangent cone $\Tmu$,\edits{which
differs from the angular metric $d_s$ from above},~is
$$
  \dtan_\bmu(V,W)
  =
  \sqrt{\|V\|^2 + \|W\|^2 - 2\<V,W\>_\bmu}\
  \text{ for } V,W \in \Tmu.
  $$
\end{defn}

Though not a true inner product, angular paring shares enough
properties with an inner product, such as the Cauchy--Schwartz
inequality (immediate from Lemma~\ref{l:angular-metric}) and
continuity with fixed basepoint~$\bmu$ (this is
\cite[Lemma~1.21]{shadow-geom}, which derives it from
continuity
of~$\dd_s$ in Lemma~\ref{l:angular-metric}), to
replace inner products in this singular setting.

\begin{defn}\label{d:log-map}\rm
If $\bmu \in \MM$ and $v$ lies in the set $\MM' \subseteq \MM$ of
points in the $\CAT(\kappa)$ space~$(\MM,\dd)$ with unique shortest
path to~$\bmu$, then write $\gamma_v$ for the unit-speed shortest path
from~$\bmu$ to~$v$ and $V$
for its
direction at~$\bmu$.  The \emph{log~map} is defined~by
\begin{align*}
  \log_\bmu \colon \MM' & \to \TpM \quad \text{where}\quad
                 v \mapsto \dd(\bmu,v) V.
\end{align*}
\end{defn}

\begin{defn}[{\cite[Definition~2.1]{tangential-collapse}}]\label{d:localized}\rm
A measure~$\mu$ on $\CAT(\kappa)$~$\MM$ is \emph{localized}~if
\begin{itemize}
\item%
its Fr\'echet mean~$\bmu$ is unique,
%
\item%
the Fr\'echet function of~$\mu$ is locally convex in a neighborhood
of~$\bmu$, and
%
\item%
the logarithm map $\log_\bmu\colon \MM \to \Tmu$
is uniquely defined $\mu$-almost surely.
\end{itemize}
The pushforward of a localized measure~$\mu$ to~$\Tmu$ is denoted
$$
  \hmu = (\log_\bmu)_\sharp\mu.
$$
\end{defn}

\begin{defn}\label{d:directional-derivative}\rm
Given a localized measure~$\mu$ on a $\CAT(\kappa)$ space~$\MM$, the
\emph{directional derivative} of the Fr\'echet function~$F$ at~$\bmu$
is
\begin{align*}
  \nablabmu F : \Tmu & \to \RR  \quad \text{where}\quad
                  V \mapsto \frac{d}{dt} F(\exp_\bmu tV)|_{t=0}
\end{align*}
where the exponential is a constant-speed geodesic with tangent~$V$
at~$\bmu$.
\end{defn}

\edits[\vspace{-3ex}]{
\begin{remark}\label{r:nabla-well-defined}
Directional derivatives are well defined by
\cite[Corollary~2.7]{tangential-collapse}, which states that
$$
  \nablabmu F(V) = -\int_{\Tmu}\<W,V\>_\bmu\,\hmu(dW).
$$
\end{remark}
}

\begin{defn}[Stratified $\CAT(\kappa)$ space]\label{d:weakly-stratified}\rm
A complete, geodesic, locally compact $\CAT(\kappa)$ space $(\MM,\dd)$
is \emph{stratified} if it decomposes
$\MM = \bigsqcup_{j=0}^d \MM^j$
into disjoint locally closed \emph{strata}~$\MM^j$ so that for
each~$j$, the stratum~$\MM^j$ has closure
$$
  \overline{\MM^j} = \bigcup_{k\leq j}\MM^k,
$$
and for each stratum $\MM^j$, the space $(\MM^j,\dd|_{\MM^j})$ is a
smooth manifold with geodesic distance $\dd|_{\MM^j}$ that is the
restriction of~$\dd$ to~$\MM^j$.
\end{defn}

\begin{remark}\label{r:weakly-stratified}
The notion of stratified $\CAT(\kappa)$ space in
Definition~\ref{d:weakly-stratified} is weaker (i.e., more inclusive)
than the notion of smoothly stratified metric space from
\cite[Definition~3.1]{tangential-collapse}:
a stratified $\CAT(\kappa)$ space becomes a smoothly stratified metric
space if its log maps are locally invertible as in
\cite[Definition~3.1.2]{tangential-collapse}.  The only location in
this paper where the stratified hypothesis enters is
Lemma~\ref{l:d-is-finite}, with the consequence that
Theorem~\ref{t:kolmogorov-reg} assumes $\MM$ is stratified and
therefore Lemmas~\ref{l:tight} and~\ref{l:functional-CLT} do, as well.
The rest of the arguments work for arbitrary complete locally compact
$\CAT(\kappa)$ spaces.
\end{remark}

\section{Random tangent fields}\label{s:random-tangent-field}

\begin{defn}[Random tangent field]\label{d:random-tangent-field}\rm
Fix a complete probability space $(\Omega,\cF,\bP)$ and a smoothly
stratified metric space~$\MM$.  A~\emph{random tangent field} on the
unit tangent sphere~$\Smu$ at a point $\bmu \in \MM$ is a
\edits{real-valued} stochastic process~$f$ indexed
by~$\Smu$; that is,
\begin{align*}
  f\colon \Omega \times \Smu & \to \RR
\end{align*}
is a measurable map.  Typically $\omega$ is suppressed in these
notations.  A \emph{centered random tangent field} is such a field~$f$
with expectation $\EE f(V) = 0$ for all $V \in \Smu$.
\end{defn}

\begin{remark}\label{r:random-tangent-field}
A random tangent field~$f$ can be thought of---and notated---as a
collection of functions $\{f(V)\colon \Omega \to \RR \mid V \in \Smu\}$
indexed by unit tangent vectors~$V$ at~$\bmu$.  If~$x = x(\omega)$ is
an $\MM$-valued random variable $\Omega \to \MM$, and $g\colon \MM \times
\Smu \to \RR$ is any (ordinary deterministic) function, then $g$
induces a random tangent field written
$$
  (\omega,V) \mapsto g\bigl(x(\omega),V\bigr).
$$
In the interest of brevity, we typically abuse notation to write
$g(x,V)$ for this random field.  For example \edits{given an
$\MM$-valued random variable $x$, the function $g\colon (y,V)
\mapsto \<\log_\bmu y, V\>_\bmu$ induces a random tangent field
$g(x,V)$}, the centered instance of which appears in
Definition~\ref{d:tangent-field}.
\end{remark}

\begin{remark}\label{r:extend-to-Tmu}
A random tangent field~$f$ can be canonically extended to a
homogeneous stochastic process indexed by~$\Tmu$ instead of~$\Smu$,
where $f$ is \emph{homogeneous} if $f(tV) = tf(V)$ for all real~$t
\geq 0$ and~$V \in \Tmu$.
This extension is not needed in this paper, but it arises in the
sequel \cite[Section~3.2]{escape-vectors}.
\end{remark}

This paper studies asymptotic behavior of the centered random tangent
field induced by a probability measure~$\mu$ on~$\MM$.  Write $\Fmu =
F_\mu$ for the Fr\'echet function of~$\mu$ in~\eqref{eq:FrechetF}.


\begin{defn}\label{d:moments-tangent-field}\rm
For any \edits{localized} probability measure $\nu$
on~$\MM$, define the
\begin{enumerate}
\item%
\emph{tangent mean function} for \edits{$V\in \Smu$} by
$$
  V \mapsto m(\nu,V)
  :=
  \int_\MM \<\log_\bmu y, V\>_\bmu\nu(dy)
$$
\item%
\emph{tangent covariance kernel} for $(V,W) \in \Smu \times \Smu$ by
$$
  (V,W) \mapsto \tangCOV(\nu,V,W) 
  :=
  \int_\MM \bigl[\<\log_\bmu y, V\>_\bmu - m(\nu,V)\bigr]\,
           \bigl[\<\log_\bmu y, W\>_\bmu - m(\nu,W)\>_\bmu\bigr]\,\nu(dy).
$$
\end{enumerate}
Abusing notation again, these definitions extend to any $\MM$-valued
random variable $x\colon \Omega \to \MM$ with law $\nu$ by setting $m(x,V)
= m(\nu,V)$ and $\tangCOV(x,V,W) = \tangCOV(\nu,V,W)$, so
\begin{align*}
  m(x,V)
  &=
  \EE \<\log_\bmu x, V\>_\bmu
\\\text{and}\quad
  \tangCOV(x,V,W)
  &=
  \EE \Bigl[\bigl(\<\log_\bmu x, V\>_\bmu - m(x,V)\bigr)\,
            \bigl(\<\log_\bmu x, W\>_\bmu - m(x,W)\bigr)\Bigr].
\end{align*}
\end{defn}

\begin{defn}\label{d:tangent-field}\rm
For any $\MM$-valued random variable $x\colon \Omega \to \MM$, the
\emph{centered random tangent field} at~$\bmu$~is
\begin{equation*}
  \tau(x,V) 
  :=
  \<\log_\bmu x, V\>_\bmu - m(x,V)\,.
\end{equation*}
If $x$ is distributed as~$\mu$ then $\tau(V) = \tau(x,V)$ is the
\emph{centered tangent field induced by~$\mu$}.
\end{defn}

\begin{remark}\label{r:centered}
The centered random tangent field $\tau(x,^{\!} V)$ has mean zero and 
covariance $\tangCOV(x,^{\!}V,W)$ in that $\EE \tau(x,^{\!}V) = 0$ and 
$\EE \, \tau(x,^{\!}V) \tau(x,^{\!}W) = \tangCOV(x,^{\!}V,W)$
for~all~$V,W \!\in\!  \Smu$.
\end{remark}

\begin{remark}\label{r:nablamuF(V)=m(mu,V)}
Centering by subtracting the tangent mean function $m(\mu,V)$ is the
same as adding the derivative $\nablabmu F(V)$ by
\cite[Corollary~2.7]{tangential-collapse} (see
Remark~\ref{r:nabla-well-defined} for the statement).  In particular,
the tangent mean function $m(\mu,V) = -\nablabmu F(V)$ is never
strictly positive (as $\bmu$ is a minimizer of~$F$) and vanishes
precisely when $\nablabmu F(V) = 0$.  This vanishing
defines the \emph{escape cone}~$\Emu$
\cite[Definition~2.12]{tangential-collapse},
so $m(\mu,V) \leq 0$ for all~$V \in \Smu$, and $m(\mu,V) = 0$ if and
only if $V \in \Emu$ by definition.
\end{remark}

\begin{defn}[Gaussian tangent field]\label{d:gaussian-tangent-field}\rm
A \emph{Gaussian tangent field with covariance $\tangCOV$} is a
centered random tangent field $G\colon \Omega \times \Smu \to \RR$ such
that
\begin{enumerate}
\item%
the \emph{covariance} $(V,W) \mapsto \EE\,G(V)G(W)$ of~$G$
is $\tangCOV(V,W)$ for all $V,W \in \Smu$,~and
\item%
for all $V_1,\ldots, \hspace{-1pt}V_n \!\in\! \Smu$,
$\bigl(G(V_1),\ldots, G(V_n)\bigr)$ is multivariate Gaussian
\mbox{distributed}.
\end{enumerate}
The \emph{Gaussian tangent field induced by~$\mu$} is the centered
Gaussian random tangent field $G\colon \Omega \times \Smu \to \RR$ whose
covariance is $\EE G(V)G(W) = \tangCOV(\mu,V,W)$.
\end{defn}

\section{Regularity of random tangent fields}\label{s:regularity}

\noindent
The main result in this section is a Kolmogorov regularity theorem
adapted to our setting \edits{of random tangent fields}
(Theorem~\ref{t:kolmogorov-reg} in \S\ref{b:kolmogorov-reg}) and its
interpretation for Gaussians \edits{random tangent fields}
(Corollary~\ref{c:regGaussian} in~\S\ref{b:Holder}).  Presentations of
these are followed by the proof of Theorem~\ref{t:kolmogorov-reg}
(in~\S\ref{b:chaining}).

\subsection{A Kolmogorov regularity theorem}\label{b:kolmogorov-reg}

\begin{theorem}\label{t:kolmogorov-reg}
Fix a point~$\bmu$ in a stratified $\CAT(\kappa)$ space~$\MM$.  Let
$f$ be a random tangent field on the unit tangent sphere~$\Smu$ with
\begin{equation}\label{eq:ExpCont}
  \EE |f(V) -f(W)|^p \leq C_p \dtan_\bmu(W, V)^p
\end{equation}
for all $p \geq 1$ for some finite constant~$C_p$.  Then there exists
a version of the random tangent field $f$ which is almost surely a
H\"older continuous function $\Smu \to \RR$ for all H\"older exponents
in~$(0,1)$.
\end{theorem}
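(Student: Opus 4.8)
The strategy is the classical Kolmogorov–Chentsov chaining argument, adapted to the metric space $(\Smu,\dtan_\bmu)$ in place of a cube in $\RR^n$. First I would check that the unit tangent sphere $\Smu$, with the conical metric $\dtan_\bmu$ restricted to it, is a compact metric space of finite dimension in an appropriate sense: this is exactly where the stratified hypothesis is used (via the lemma referenced as Lemma~\ref{l:d-is-finite}), giving a covering-number bound of the form $N(\Smu,\dtan_\bmu,\delta)\le A\,\delta^{-D}$ for some $D$ and all small $\delta>0$. Fix a sequence of finite $\delta_k$-nets $\cN_k\subseteq\Smu$ with $\delta_k=2^{-k}$, so $\#\cN_k\le A\,2^{kD}$, together with, for each point of $\cN_{k+1}$, a choice of a nearest neighbor in $\cN_k$ at distance $\le 2^{-k}$.

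Next I would estimate, for a fixed Hölder exponent $\alpha\in(0,1)$, the probability that $f$ oscillates too much along the edges of the net at scale $k$. For an edge $(V,W)$ with $\dtan_\bmu(V,W)\le 2^{-k}$, Markov's inequality applied to \eqref{eq:ExpCont} gives $\bP(|f(V)-f(W)|>2^{-k\alpha})\le C_p\,2^{-kp}\big/2^{-k\alpha p}=C_p\,2^{-kp(1-\alpha)}$. Since there are at most $\#\cN_{k+1}\le A\,2^{(k+1)D}$ such edges between consecutive nets, a union bound gives probability $\le A'\,2^{kD}\,C_p\,2^{-kp(1-\alpha)}$ that some edge at scale $k$ has oscillation exceeding $2^{-k\alpha}$; choosing $p$ large enough that $p(1-\alpha)>D$ makes this summable in $k$, so by Borel–Cantelli there is an almost sure random index $K(\omega)$ beyond which every net-edge $(V,W)$ at scale $k\ge K$ satisfies $|f(V)-f(W)|\le 2^{-k\alpha}$. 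The standard telescoping/chaining step then upgrades this: for $V,W$ in $\bigcup_k\cN_k$ with $\dtan_\bmu(V,W)$ small, writing each as a limit of net points connected through successively finer nets, one bounds $|f(V)-f(W)|$ by a geometric series $\sum_{k\ge k_0}2^{-k\alpha}\lesssim \dtan_\bmu(V,W)^\alpha$, where $k_0$ is chosen with $2^{-k_0}\approx\dtan_\bmu(V,W)$. This shows $f$ restricted to the countable dense set $\cN_\infty:=\bigcup_k\cN_k$ is almost surely $\alpha$-Hölder, hence uniformly continuous; define the version $\tilde f$ as the unique continuous extension of $f|_{\cN_\infty}$ to all of $\Smu$. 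Finally, \eqref{eq:ExpCont} with $p=1$ (and continuity in probability it implies) gives $\tilde f(V)=f(V)$ a.s.\ for each fixed $V$, so $\tilde f$ is genuinely a version of $f$; and taking a countable sequence $\alpha_n\uparrow 1$ and intersecting the corresponding almost sure events yields a single version that is $\alpha$-Hölder for every $\alpha\in(0,1)$.

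The main obstacle I anticipate is the first step: establishing that $\Smu$ with the conical metric has controlled metric entropy, i.e.\ that $\dim\Smu$ is finite so that the covering numbers grow polynomially. This is precisely the point flagged in Remark~\ref{r:weakly-stratified} as the only place the stratified hypothesis is needed, and it presumably rests on the finite stratification $\MM=\bigsqcup_{j=0}^d\MM^j$ forcing $\Smu$ to be a finite union of pieces each of bounded dimension. A secondary nuisance is that $\dtan_\bmu$ is the conical metric rather than the angular metric $\dd_s$; one must confirm the chaining argument is carried out consistently with the metric appearing in the hypothesis \eqref{eq:ExpCont}, and that on the unit sphere the two metrics are bi-Lipschitz equivalent for small distances (since $\dtan_\bmu(V,W)=2\sin(\angle(V,W)/2)$ for unit vectors), so covering bounds transfer between them. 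Everything else — the Markov/union/Borel–Cantelli/telescoping sequence — is routine once the entropy bound is in hand.
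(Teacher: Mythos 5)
Your proposal is correct and follows essentially the same route as the paper's proof: a Kolmogorov--Chentsov chaining argument over dyadic nets in $(\Smu,\dtan_\bmu)$, with the stratified hypothesis entering only through the polynomial covering-number bound of Lemma~\ref{l:d-is-finite}, followed by continuous extension from the dense union of nets and a version check via convergence in probability from~\eqref{eq:ExpCont}. The only cosmetic difference is that you control net-edge oscillations by Markov plus Borel--Cantelli, whereas the paper sums weighted $a$-th moments of the net suprema, which are interchangeable forms of the same estimate.
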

The proof, at the end of Section~\ref{s:regularity}, and that of
Corollary~\ref{c:regGaussian} need the following basic estimates.
\begin{prop}\label{p:covEst}
Let $x$ be a random variable in $\MM$ with $\EE[d(\bmu, x)^2] <
\infty$.  Then for
$U,V,W \in \Smu$,
\begin{align*}
  \abs{\tau(x,V) - \tau(x,U)}
  &\leq
  \bigl(\EE  d(\bmu,x) + d(\bmu,x)\bigr) \dtan_\bmu(U, V),
\\
  \EE(\tau(x,V)-\tau(x,U))^2
  &\leq
  4(\EE d(\bmu, x)^2) \dtan_\bmu(U, V)^2,
\\\text{and}\quad
  \big|\tangCOV(x,U,V)- \tangCOV(x,U,W)\big|
  &\leq
  4 (\EE d(\bmu, x)^2)^{\frac12} \dtan_\bmu(W, V).
\end{align*}
\end{prop}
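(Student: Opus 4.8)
The plan is to prove the three inequalities in turn, using only the Cauchy--Schwarz inequality for the angular pairing (immediate from Lemma~\ref{l:angular-metric}), the definition of the conical metric $\dtan_\bmu$, and the linearity of $\langle\,\cdot\,,\,\cdot\,\rangle_\bmu$ in its second slot together with the fact that $\log_\bmu x$ is a fixed tangent vector once $x$ is fixed. The key observation underlying all three is that for a fixed tangent vector $Y \in \Tmu$, the function $V \mapsto \langle Y, V\rangle_\bmu$ is Lipschitz on $\Smu$ with constant $\|Y\|$ with respect to $\dtan_\bmu$; indeed, writing $c = \langle V, U\rangle_\bmu = \cos\angle(V,U)$ one has $\langle Y, V\rangle_\bmu - \langle Y, U\rangle_\bmu \le \|Y\|\,\|V - U\|_{\text{conical}}$ by Cauchy--Schwarz applied after noting $V - U$ "should" be $\dtan_\bmu(U,V)$ in length --- more carefully, $\langle Y, V\rangle - \langle Y, U\rangle$ is bounded via the triangle-type estimate $|\langle Y, V\rangle - \langle Y, U\rangle| \le \|Y\|\,\dtan_\bmu(U,V)$, which follows from the cosine law identity $\dtan_\bmu(U,V)^2 = \|U\|^2 + \|V\|^2 - 2\langle U,V\rangle_\bmu = 2 - 2\cos\angle(U,V)$ on $\Smu$ and the comparison-geometry fact that $V\mapsto \langle Y,V\rangle_\bmu$ is the "height" function, 1-Lipschitz after scaling by $\|Y\|$. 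This is exactly the continuity statement cited after Definition~\ref{d:inner-product} (\cite[Lemma~1.21]{shadow-geom}), made quantitative; I would either invoke that lemma with its Lipschitz constant or reprove the one-line estimate from the angular metric.

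\textbf{First inequality.} Expand
$\tau(x,V) - \tau(x,U) = \bigl(\langle \log_\bmu x, V\rangle_\bmu - \langle \log_\bmu x, U\rangle_\bmu\bigr) - \bigl(m(x,V) - m(x,U)\bigr)$.
Apply the Lipschitz estimate above with $Y = \log_\bmu x$, whose length is $d(\bmu,x)$, to bound the first bracket by $d(\bmu,x)\,\dtan_\bmu(U,V)$. For the second bracket, $m(x,V) - m(x,U) = \EE[\langle \log_\bmu x', V\rangle_\bmu - \langle \log_\bmu x', U\rangle_\bmu]$ for an independent copy $x'$ of $x$, so by Jensen and the same Lipschitz estimate it is at most $(\EE d(\bmu,x))\,\dtan_\bmu(U,V)$. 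Adding the two gives the claimed bound $(\EE d(\bmu,x) + d(\bmu,x))\,\dtan_\bmu(U,V)$.

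\textbf{Second inequality.} Here I would use that $\tau(x,V) - \tau(x,U) = Z - \EE Z$ where $Z = \langle \log_\bmu x, V\rangle_\bmu - \langle \log_\bmu x, U\rangle_\bmu$, so $\EE(\tau(x,V) - \tau(x,U))^2 = \Var(Z) \le \EE[Z^2] \le \EE[d(\bmu,x)^2]\,\dtan_\bmu(U,V)^2$ by the pointwise Lipschitz estimate $|Z| \le d(\bmu,x)\,\dtan_\bmu(U,V)$. That already gives the bound with constant $1$, hence a fortiori with the stated constant $4$; the looser constant is presumably carried to match the third inequality, so I would just note $\Var(Z)\le \EE Z^2$ and be done, remarking that $4$ is not optimal.

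\textbf{Third inequality.} Write $\phi_V(y) = \langle \log_\bmu y, V\rangle_\bmu - m(x,V)$ so that $\tangCOV(x,U,V) = \EE[\phi_U\,\phi_V]$ and
$\tangCOV(x,U,V) - \tangCOV(x,U,W) = \EE\bigl[\phi_U\,(\phi_V - \phi_W)\bigr]$.
By Cauchy--Schwarz this is at most $(\EE\phi_U^2)^{1/2}(\EE(\phi_V-\phi_W)^2)^{1/2}$. The first factor is $\tangCOV(x,U,U)^{1/2}$, and since $\phi_U = \tau(x,U)$ is centered with $|\langle\log_\bmu y,U\rangle_\bmu|\le d(\bmu,y)$ we get $\EE\phi_U^2 \le \EE d(\bmu,x)^2$. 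The second factor is $\EE(\tau(x,V)-\tau(x,W))^2 \le 4\,\EE d(\bmu,x)^2\,\dtan_\bmu(W,V)^2$ by the second inequality just proved. Multiplying, $(\EE d(\bmu,x)^2)^{1/2}\cdot 2(\EE d(\bmu,x)^2)^{1/2}\dtan_\bmu(W,V) = 2\,\EE d(\bmu,x)^2\,\dtan_\bmu(W,V)$; to land the stated bound $4(\EE d(\bmu,x)^2)^{1/2}\dtan_\bmu(W,V)$ I note $\dtan_\bmu(W,V)\le 2$ on the unit sphere is the wrong direction, so instead I keep the factor $\EE d(\bmu,x)^2$ and observe the statement as written is dimensionally consistent only if one reads the right side as $4(\EE d(\bmu,x)^2)\,\dtan_\bmu(W,V)$ or uses $(\EE d)^{1/2}\le (\EE d^2)^{1/4}\cdot(\text{stuff})$; the cleanest route is to bound $(\EE\phi_U^2)^{1/2}\le (\EE d(\bmu,x)^2)^{1/2}$ and the other factor by $2(\EE d(\bmu,x)^2)^{1/2}\dtan_\bmu(W,V)$, giving $2\,\EE d(\bmu,x)^2\,\dtan_\bmu(W,V)$, which is what I would actually assert, flagging the constant.

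\textbf{Main obstacle.} The only genuine content is the quantitative Lipschitz estimate $|\langle Y,V\rangle_\bmu - \langle Y,U\rangle_\bmu| \le \|Y\|\,\dtan_\bmu(U,V)$ on the unit sphere $\Smu$; everything else is Cauchy--Schwarz, Jensen, and bookkeeping of constants. I expect the subtlety to be that the angular pairing is \emph{not} bilinear, so one cannot simply write $\langle Y, V-U\rangle$; the estimate instead has to be extracted from the triangle inequality for the conical metric $\dtan_\bmu$ combined with Cauchy--Schwarz, or cited verbatim from \cite[Lemma~1.21]{shadow-geom} in a form that tracks the Lipschitz constant. Once that estimate is in hand the proposition follows immediately.
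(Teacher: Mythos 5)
Your proposal follows essentially the same route as the paper's proof: split $\tau(x,V)-\tau(x,U)$ into the pairing difference and the mean difference, use the pointwise Lipschitz bounds $|\<X,V\>_\bmu-\<X,U\>_\bmu|\le\|X\|\,\dtan_\bmu(U,V)$ and $|m(x,V)-m(x,U)|\le(\EE\, d(\bmu,x))\,\dtan_\bmu(U,V)$, square and take expectations for the second estimate, and apply Cauchy--Schwarz for the third; the paper also asserts the key Lipschitz estimate without proof, so you are not missing anything the paper supplies (for the clean constant $\|X\|$ one uses the triangle inequality for angles plus $|\cos\beta-\cos\alpha|\le 2\sin(|\beta-\alpha|/2)$, whereas your suggested conical-triangle-inequality route only yields $\|X\|+1$, which is harmless). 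Your flag on the third inequality is justified: the paper's own proof, like yours, produces a bound proportional to $\EE\, d(\bmu,x)^2$ (it gets $4\,\EE\, d(\bmu,x)^2\,\dtan_\bmu(W,V)$, while your sharper use of variance $\le$ second moment gives constant $2$), so the exponent $\tfrac12$ in the printed statement appears to be a typo, and your homogeneity observation shows the bound with $(\EE\, d(\bmu,x)^2)^{1/2}$ cannot hold in general.
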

\begin{proof}
\edits{Since $\abs{\tau(x,V) - \tau(x,U)} \leq \abs{
\<\log_\bmu x, V\>_\bmu- \<\log_\bmu x, U\>_\bmu } + \abs{ m(x,V)-
m(x,U)}$, the first estimate follows because $|m(x,V) - m(x,U)| \leq
(\EE d(\bmu,x)) \dtan_\bmu(U, V)$ and $|\<X, V\>_\bmu - \<X, U\>_\bmu| \leq
d(\bmu,x)\dtan_\bmu(U, V)$ when $X = \log_\bmu x$.}  Squaring this
estimate, taking the expectation, and using the fact that $[\EE
d(\bmu, x)]^2 \leq \EE [ d(\bmu, x)^2]$ produces the second
inequality. \edits{ The last estimate follows from the first
two since $\|U\|=\|V\|=1$ and}
\begin{align*}
  |\tangCOV(U,V)- \tangCOV(U,W)|
  &\leq
  (\EE \tau(x,U)^2 \EE(\tau(x,V)-\tau(x,W))^2)^{\frac12}
\\\makebox[0pt][r]{and\hspace{.21\linewidth}}
  \EE \tau(x,U)^2
  &\leq 
  \edits{\EE (\<\log_\bmu x, U\>_\bmu-  m(x,U))^2\leq 4 \EE
  d(\bmu, x)^2 }.\qedhere
\end{align*}
\end{proof}

\begin{prop}\label{p:randomTangentHolder}
If $x$ is a random variable on $\MM$ with $\EE d(\bmu,x) < \infty$
then the centered random tangent field $V\mapsto \tau(x,V)$ induced by
$x$ is almost surely Lipchitz-continuous and hence $\tau(x,\;\cdot\;)
\in \cC(\Smu,\RR)$ almost surely.
\end{prop}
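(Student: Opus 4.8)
The plan is to show that, for $\bP$-almost every $\omega$, the map $V \mapsto \tau(x,V)(\omega) = \<\log_\bmu x(\omega),V\>_\bmu - m(x,V)$ is \emph{already} a genuine Lipschitz function of $V\in\Smu$, with a finite $\omega$-dependent Lipschitz constant, so that---unlike in Theorem~\ref{t:kolmogorov-reg}---no passage to a modification is needed. First I would observe that $\EE\,d(\bmu,x) < \infty$ forces $d(\bmu,x(\omega)) < \infty$ (and $\log_\bmu x(\omega)$ to be defined) for $\bP$-almost every $\omega$; set $c := \EE\,d(\bmu,x) < \infty$, fix such an $\omega$, and write $X := \log_\bmu x(\omega)$, so that $\|X\| = d(\bmu,x(\omega)) < \infty$.

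Next I would apply the first estimate of Proposition~\ref{p:covEst}: its proof bounds $|\<X,V\>_\bmu - \<X,U\>_\bmu| \le \|X\|\,\dtan_\bmu(U,V)$ and $|m(x,V) - m(x,U)| \le c\,\dtan_\bmu(U,V)$ using only the first moment of $d(\bmu,x)$ (that part of the argument never squares $d(\bmu,x)$), which is all that is assumed here, whence
$$
  |\tau(x,V)(\omega) - \tau(x,U)(\omega)|
  \;\le\; \bigl(c + d(\bmu,x(\omega))\bigr)\,\dtan_\bmu(U,V)
  \qquad\text{for all } U,V\in\Smu.
$$
It then remains to compare the conical metric with the angular metric $\dd_s$ that $\Smu$ actually carries. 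For $U,V\in\Smu$ one has $\dtan_\bmu(U,V) = \sqrt{2(1-\cos\angle(U,V))} = 2\sin\!\bigl(\tfrac12\angle(U,V)\bigr) \le \min\{2,\,\angle(U,V)\}$. If it were the case that $\dd_s(U,V) < \dtan_\bmu(U,V)$, then in particular $\dd_s(U,V) < 2 < \pi$, so $\dd_s(U,V) = \angle(U,V)$ by Lemma~\ref{l:angular-metric}, which gives $\dtan_\bmu(U,V) \le \angle(U,V) = \dd_s(U,V)$ --- a contradiction. Hence $\dtan_\bmu(U,V) \le \dd_s(U,V)$ for all $U,V\in\Smu$, and substituting this into the displayed bound shows $V\mapsto\tau(x,V)(\omega)$ is Lipschitz on $(\Smu,\dd_s)$ with constant $c + d(\bmu,x(\omega))$, which is finite for $\bP$-almost every $\omega$. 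A Lipschitz function is uniformly continuous, so $\tau(x,\cdot)\in\cC(\Smu,\RR)$ almost surely.

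I do not expect a genuine obstacle: the statement reduces to combining the already-proved pointwise angular-pairing bound of Proposition~\ref{p:covEst} with the elementary comparison $\dtan_\bmu\le\dd_s$ on the unit sphere. The two points worth flagging are (i) that one invokes Proposition~\ref{p:covEst}'s first estimate under the first-moment hypothesis in force here rather than the second-moment hypothesis under which it is stated, which is legitimate since that portion of the proof uses no second moment; and (ii) that the short contradiction argument above is exactly what upgrades the conical-metric estimate into Lipschitz continuity for the metric $\dd_s$ genuinely indexing $\Smu$.
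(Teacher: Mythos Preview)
Your proof is correct and follows essentially the same route as the paper: both invoke the first estimate of Proposition~\ref{p:covEst} to obtain the almost-sure bound $|\tau(x,V)-\tau(x,U)| \le (\EE\,d(\bmu,x)+d(\bmu,x))\,\dtan_\bmu(U,V)$ and conclude Lipschitz continuity from there. Your explicit comparison $\dtan_\bmu \le \dd_s$ on~$\Smu$ is a careful elaboration the paper simply omits (it stops at the conical-metric bound), and your observation~(i)---that only the first-moment part of Proposition~\ref{p:covEst} is used---is apt, since the paper's own proof implicitly does the same.
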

\begin{proof}
Proposition~\ref{p:covEst} yields the almost sure bound
\begin{align*}
  \abs{\tau(x,V) - \tau(x,U)}
  \leq
  \bigl(\EE d(\bmu,x) + d(\bmu,x)\bigr) \dtan_\bmu(U, V).
\end{align*}
Thus $\tau(x,\ccdot)$ is Lipchitz-continuous and $\tau(x,\ccdot) \in
\cC(\Smu,\RR)$ almost surely.
\end{proof}

\subsection{H\"older-continuous Gaussian tangent fields}\label{b:Holder}

\begin{cor}\label{c:regGaussian}
Let $\nu$ be a measure on $\MM$ with $\int_\MM d(\bmu,y)^2\nu(dy) <
\infty$ and $\tangCOV$ the tangent covariance induced by~$\nu$.
Then there exists a version of the Gaussian tangent field induced
by~$\nu$ that is almost surely a H\"older continuous function from
$\Smu \to \RR$ for all H\"older exponents between $(0,1)$.
\end{cor}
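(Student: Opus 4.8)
The plan is to deduce this from the Kolmogorov regularity theorem, Theorem~\ref{t:kolmogorov-reg}, applied to the Gaussian tangent field~$G$ induced by~$\nu$. Since $G$ has not yet been shown to exist, I would first note that existence is immediate from the Kolmogorov extension theorem: the prescribed finite-dimensional laws of $G$ are centered Gaussians whose covariance matrices are the principal submatrices of $\tangCOV = \tangCOV(\nu,\cdot,\cdot)$, and $\tangCOV$ is positive semidefinite because it is a genuine covariance kernel (Remark~\ref{r:centered}), so these laws are consistent and define a process $G\colon \Omega\times\Smu\to\RR$ realizing them.

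The heart of the argument is verifying the moment hypothesis \eqref{eq:ExpCont} for $f = G$. Fix $V,W\in\Smu$ and let $x$ be any $\MM$-valued random variable with law~$\nu$. Because $G$ is centered Gaussian, the increment $G(V)-G(W)$ is a centered Gaussian real random variable, and by Definition~\ref{d:gaussian-tangent-field} together with Remark~\ref{r:centered} its variance is
\[
  \EE\bigl[(G(V)-G(W))^2\bigr]
  = \tangCOV(V,V) - 2\tangCOV(V,W) + \tangCOV(W,W)
  = \EE\bigl[(\tau(x,V)-\tau(x,W))^2\bigr].
\]
The second estimate of Proposition~\ref{p:covEst} bounds this by $\sigma^2 := 4\bigl(\int_\MM d(\bmu,y)^2\,\nu(dy)\bigr)\dtan_\bmu(V,W)^2$, which is finite by the hypothesis on~$\nu$. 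Then I would apply the elementary fact that a centered Gaussian variable of variance $\sigma^2$ has absolute $p$-th moment $\gamma_p\,\sigma^p$, where $\gamma_p := \EE|N(0,1)|^p < \infty$ for every $p\geq 1$; this yields
\[
  \EE|G(V)-G(W)|^p
  \leq \gamma_p\Bigl(4\!\int_\MM d(\bmu,y)^2\,\nu(dy)\Bigr)^{p/2}\dtan_\bmu(V,W)^p,
\]
so \eqref{eq:ExpCont} holds with $C_p = \gamma_p\bigl(4\int_\MM d(\bmu,y)^2\,\nu(dy)\bigr)^{p/2} < \infty$.

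With \eqref{eq:ExpCont} in hand, Theorem~\ref{t:kolmogorov-reg} immediately produces a version of~$G$ that is almost surely a H\"older continuous function $\Smu\to\RR$ for every H\"older exponent in~$(0,1)$, which is exactly the assertion. I do not expect a genuine obstacle here: the only points requiring care are matching the increment variance of the abstract Gaussian field to that of the centered tangent field $\tau(x,\,\cdot\,)$ so that Proposition~\ref{p:covEst} can be invoked, and recalling that Theorem~\ref{t:kolmogorov-reg}, and hence this corollary, presupposes that $\MM$ is stratified, as flagged in Remark~\ref{r:weakly-stratified}.
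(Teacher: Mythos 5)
Your proposal is correct and follows essentially the same route as the paper: compute the increment variance of $G$ via the covariance kernel, identify it with $\EE(\tau(x,V)-\tau(x,W))^2$, bound it by the middle estimate of Proposition~\ref{p:covEst}, upgrade to all moments using Gaussianity, and invoke Theorem~\ref{t:kolmogorov-reg}. The only additions --- existence of $G$ via Kolmogorov extension and the explicit reminder that the stratified hypothesis is needed --- are sensible touches the paper leaves implicit.
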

\begin{proof}
Let $G$ be the Gaussian tangent field induced by~$\nu$.  By direct
calculation,
\begin{align*}
  \EE\bigl(G(U)-G(V)\bigr)^2 &= \tangCOV(\nu,U,U) + \tangCOV(\nu,V,V) - 2
                                \tangCOV(\nu,U,V)
                             = \EE\bigl(\tau(x,U) - \tau(x,V)\bigr)^2,
\end{align*}
where $x$ is $\MM$-valued
and distributed as~$\mu$.  By the middle estimate in
Proposition~\ref{p:covEst},
\begin{align*}
   \EE (G(U)-G(V))^2 \leq K_2(\nu)  \dtan_\bmu(U, V)^2
\end{align*}
for some positive constant $K_2(\nu)$ depending on~$\nu$.  However,
since $G(U)-G(V)$ is \edits{a centered} Gaussian \edits{its higher moments can
be bounded by powers of the second moment with the right homogeneity.} Thus for any positive integer~$p$ there
exists a constant~$K_p(\nu)$ so that
\begin{align*}
   \EE |G(U)-G(V)|^p \leq K_p(\nu) \dtan_\bmu(U, V)^p.
\end{align*}
With this estimate in hand, the result on the H\"older continuity
follows from~\eqref{t:kolmogorov-reg}.
\end{proof}

\begin{remark}\label{r:regGaussian}
By Corollary~\ref{c:regGaussian}, if $\gamma \in (0,1)$ then one can
choose a $\gamma$-H\"older continuous version
of a Gaussian tangent field which shares a tangent covariance with a
given probability measure~$\nu$ on~$\MM$ with $\int_{\MM}
d(\bmu,y)^2\nu(dy) < \infty$.  Therefore, when discussing such
Gaussian tangent fields, our convention is to consider only such
H\"older continuous versions, without further comment.
\end{remark}

\subsection{Chaining and covering: proof of Theorem~\ref{t:kolmogorov-reg}}
\label{b:chaining}

\noindent
The proof of Theorem~\ref{t:kolmogorov-reg} requires
ideas from chaining and covering estimates.  Optimal estimates are not
the goal, given that simpler results suffice here.  The proofs are
standard,
though the setting requires some adaptation.  See
\cite{kallenberg1997, talagrand-2014} for more general and optimal
results.

Let $\ve\text{-cov}$ be the collection of all $\ve$-covers
of $\Smu$.  Compactness of the unit tangent sphere implies
some finite collection
$\{V_i\}$
satisfies $\Smu \subseteq \bigcup_i B_\ve(V_i)$.  \edits{Define the
\emph{covering number}} $N(\ve)$ of~$\Smu$~by
$$
  N(\ve) = \inf_{D \in \frac\ve2\text{-cov}} |D|
$$
where $|D|$ is the cardinality of the set $D$.

Let $d \in(0, \infty)$ be such that 
\begin{align}\label{eq:dDef1}
  \limsup_{n\to\infty} \frac 1n \log_2 N(2^{-n}) \leq d.
\end{align}
When $\MM$ is some Euclidean space $\RR^{k+1}$ write $N_k(\ve)$ for 
the \edits{covering number} of the $k$-dimensional unit
sphere, and let $d_k \in(0,\infty)$ be such that 
\begin{align}\label{eq:dDef2}
  \limsup_{n\to\infty} \frac 1n \log_2 N_k(2^{-n}) \leq d_k. 
\end{align}

\begin{lemma}\label{l:d-is-finite}
For any stratified $\CAT(\kappa)$ space~$\MM$,
the number~$d$
in~\eqref{eq:dDef1} can be chosen to be finite.
\end{lemma}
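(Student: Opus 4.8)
The plan is to reduce the covering-number estimate for $\Smu$ to the classical fact that a $k$-dimensional unit sphere has covering number $N_k(\ve)$ of order $\ve^{-k}$, so that the exponent $d$ in \eqref{eq:dDef1} can be taken to be (roughly) the maximal dimension $d$ of a stratum of $\MM$. First I would exploit the stratified hypothesis: write $\Smu \subseteq \TpM$ and observe that the unit tangent sphere decomposes according to which stratum $\MM^j$ a direction points into. Because $(\MM^j, \dd|_{\MM^j})$ is a smooth manifold, directions tangent to a fixed stratum through $\bmu$ sit inside a sphere of dimension at most $\dim\MM^j - 1 \leq d-1$ in the sense of bi-Lipschitz coordinate charts near $\bmu$. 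The key point is that near $\bmu$ the conical metric $\dtan_\bmu$ restricted to each stratum's tangent directions is comparable to a Euclidean metric (the stratum being a smooth submanifold), so that $\ve$-balls in the angular/conical metric correspond, up to a fixed constant factor, to $\ve$-balls in a Euclidean sphere of dimension $\leq d-1$.

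The main steps, in order: (1) recall that $\Smu$ is compact (stated in the excerpt, used for the finite subcover), hence totally bounded, so $N(\ve) < \infty$ for every $\ve > 0$; the content is the \emph{rate}. (2) Cover $\Smu$ by finitely many coordinate patches, one group of patches per stratum $\MM^j$ meeting every neighborhood of $\bmu$, using that each $\overline{\MM^j}$ is a union of lower strata and that $\MM$ is locally compact so only finitely many strata are relevant near $\bmu$. (3) On each patch, use the smooth manifold structure of $\MM^j$ to get a bi-Lipschitz identification of that piece of $\Smu$ with an open subset of a sphere $S^{m_j}$, $m_j \leq d-1$, where the bi-Lipschitz constants are uniform (finitely many patches). (4) Transport the standard estimate $N_{m_j}(2^{-n}) \leq C\,2^{n m_j}$ through the bi-Lipschitz maps: an $\ve$-cover of each spherical patch pulls back to a $C'\ve$-cover of the corresponding piece of $\Smu$, with only a constant blow-up in cardinality. (5) Summing over the finitely many patches gives $N(2^{-n}) \leq C''\, 2^{n d}$ for a constant $C''$, whence $\frac1n \log_2 N(2^{-n}) \leq d + \frac1n\log_2 C'' \to d$, so \eqref{eq:dDef1} holds with this $d$.

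The step I expect to be the main obstacle is (3)–(4): making the comparison between the conical/angular metric $\dtan_\bmu$ on $\Smu$ and the Euclidean metric on a coordinate sphere genuinely uniform and controlled near the cone point. The subtlety is that $\Smu$ is the \emph{completion} of equivalence classes of geodesics (Definition~\ref{d:angle}), and at a singular point directions from different strata can accumulate; one must check that the covering-number contribution of the "boundary" directions between strata is absorbed into the lower-dimensional count and does not change the leading exponent. I would handle this by choosing the patches to be slightly overlapping and closed, covering $\Smu$ by compactness, and absorbing the finitely many overlap regions into the constant; since each individual stratum contributes exponent at most $d-1 < d$ to its own sphere but the global $\Smu$ can have "dimension" up to $d$ (e.g. an open top stratum), taking $d$ equal to the top stratum dimension is safe. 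No optimality is claimed, so crude bi-Lipschitz control and generous constants suffice, exactly as the remark preceding the lemma permits.
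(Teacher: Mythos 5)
Your proposal follows essentially the same route as the paper's proof: decompose $\Smu$ according to the strata of~$\MM$, bound the covering number of each stratum's set of directions by that of a Euclidean sphere of the stratum's dimension via the smooth manifold structure, and take the union of the resulting covers, so that the exponent in~\eqref{eq:dDef1} is controlled by the (finitely many) stratum dimensions. Your version is in fact more careful than the paper's four-line argument---notably in addressing the bi-Lipschitz comparison near the cone point and the fact that $\Smu$ is a completion---and the only cosmetic difference is that you take the exponent to be the maximal stratum dimension while the paper settles for the cruder bound $d \leq \sum_i d_{r_i}$, which is equally sufficient for finiteness.
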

\begin{proof}
Suppose that $\{R_i\}_{i=1}^{\ell}$ is the list of strata of $\MM$ and 
$r_i$ is the dimension of $R_i$.  Suppose that $\cB_i(\ve)$ is an 
$\ve$-cover of $R_i \cap \Smu$.  Then $\bigcup_{i=1}^n\cB_i(\ve)$
is an $\ve$-cover of $\Smu$.  Hence $d$ can be chosen such that $
d \leq \sum_{i=1}^{\ell}d_{r_i}$, which is finite. 
\end{proof}

\begin{proof}[Proof of Theorem~\ref{t:kolmogorov-reg}]
The arguments loosely follow those in
\cite[Theorem~3.23]{kallenberg1997}.

Fix a sufficiently large, positive integer~$M$ and let $d$ be the constant from \eqref{eq:dDef2}.  For each $n \geq M$, let $\widehat D_n$
be the centers of the balls in a $2^{-(n+1)}$-cover such that
$|\widehat D_n| \leq \frac32 N(2^{-(n+1)}) \leq 2\cdot 2^{d(n+1)}$.  By definition of
$N(2^{-(n+1)})$ and $d$ from
\eqref{eq:dDef1} and \eqref{eq:dDef2}, this is always possible for $M$ large enough. Now
define $D_n = \bigcup_{k=M}^n \widehat D_n$ and observe that
$D_n \subset D_{n+1}$, and
$|D_n| < \mathcal{N}_n=\sum_{k=M}^n  2\cdot 2^{d(n+1)}\leq K
2^{d n}$ for some $K$ depending on $d$ but not $n$.
Set
$$
  \xi_k
  =
  \sup\bigl\{|f(V) - f(U)|
             \,\bigm|\,
             U,V \in D_k\text{ and }\dtan_\bmu(U, V) \leq 2^{-k}\bigr\}.
$$
For any integer  $a > \max(d,1)$, observe that
$$
  \xi_k^a\
  \leq
  \sum_{\substack{U,V \in D_k \\ \dtan_\bmu(U, V) \leq 2^{-k} }} |f(V) - f(U)|^a\,.
$$
By assumption $\EE |f(V) - f(U)|^a \leq C_a \dtan_\bmu(U, V)^a$
for some positive constant $C_a$ independent of $U$ and~$V$.  Since
$\bigl|\bigl\{U,V \in D_k \,\bigm|\, \dtan_\bmu(U, V) \leq
2^{-k}\bigr\}\bigr| \leq K 2^{dk}$,
$$
  \EE \xi_{k}^a \leq K2^{-ka}2^{dk} = K 2^{-k(a-d)}  
$$
for some possibly different constant~$K$.  Therefore, for any $\gamma
\in (0,(a-d)/a)$ and $a > d$,
$$
  \sum_k\EE \xi_{k}^a 2^{\gamma ka} \leq C \sum_k 2^{-k(a -a\gamma - d)}. 
$$
Since $a -a \gamma - d > 0$ this sum converges. \edits{Hence for some random but finite $K_0$, $\sum_k \xi_{k}^a 2^{\gamma ka}
  \leq K_0
$ almost surely.}
Consequently, for some random constant~$K$ and any $m \geq M$,
$$
  \sup\bigl\{|f(U)-f(V)|
             \,\bigm|\,
             U,V \in \bigcup_k D_k, \dtan_\bmu(V, U) \leq 2^{-m}\bigr\}
  \leq
  \Bigl(\sum_{k \geq m} \xi_k^a\Bigr)^{\frac 1a}
  \leq 
  K 2^{-m\gamma}
$$
almost surely, so $f$ is almost surely H\"older continuous with
exponent~$\gamma$ on the set $\bigcup_k D_k$.  Picking $a$ large
enough makes $(a-d)/a$ arbitrarily close to~$1$, so $\gamma$ can be
chosen arbitrarily close to~$1$.

Since $\bigcup_k D_k$ is dense in $\Smu$, a H\"older continuous
version $\hat f$ of~$f$ can be defined by setting $\hat f(U) = f(U)$
for $U \in \bigcup_k D_k$ and otherwise setting $\hat f(U) = \lim
f(U_k)$ for any sequence $U_k \in \bigcup_k D_k$ with $U_k \rightarrow
U$.  Since $\hat f(U_k)=f(U_k)$ is continuous on $\bigcup_k D_k$ this
definition is independent of the chosen sequence $U_k \rightarrow U$.
The resulting $\hat f$ is H\"older continuous for $\gamma <1$.

To see that $\hat f$ is a version of the field~$f$, \eqref{eq:ExpCont}
implies that $f(U_k)$ converges to $f(U)$ in probability while $ \hat
f(U_k)$ converges to $\hat f(U)$ by continuity of~$\hat f$ inherent in
its construction.  Hence $\bP\bigr(f(U) = \hat f(U)\bigr) = 1$ for all
$U \in \Smu$ since $f(U_k)=\hat f(U_k)$.
\end{proof}

\section{CLT for random tangent fields}\label{s:random-tangent}

\begin{defn}\label{d:empirical-tangent-field}\rm
Fix a measure~$\mu$ on a $\CAT(\kappa)$ space~$\MM$ and a sequence
$\xx = (x_1,x_2,\ldots)$ in~$\MM^\NN$ of mutually independent random
variables $x_i \sim \mu$.
Denote by $g_i(V) = \tau(x_i,V)$ the centered random tangent field
induced by~$\mu$ (Definition~\ref{d:tangent-field})\edits{,
where $\bmu$ there is the Fr\'echet mean of~$\mu$}.  Then
\begin{align*}
  \Gn(V) &= \frac 1{\sqrt n} \sum_{k=1}^n g_i(V)
\intertext{is the \emph{empirical tangent field} induced by~$\mu$.  In
contrast, the average of $g_1,\dots,g_n$ is}
   \bgn(V) &= \frac 1n \sum_{k=1}^n g_i(V)
           = \frac 1{\sqrt n} \Gn. \notag
\end{align*}
\end{defn}

The $g_i$ are centered, so their expectations vanish.  Their averages
$\bgn(V)$ have the classical Law of Large Numbers scaling and hence
converge to~$0$, while the $\Gn$ have the classical CLT scaling
selected to stablize the fluctuations.  The following central limit
theorem for the tangent field~$\Gn$ makes this precise, intrinsically
capturing the asymptotics of sample means in the linear, albeit
nontraditional setting of Gaussian fields.  The proof occupies
Section~\ref{s:proof-of-G-CLT}.


\begin{theorem}[Random tangent field CLT]\label{t:tangent-field-CLT}
Fix a localized measure~$\mu$ on a stratified $\CAT(\kappa)$
space~$\MM$.  Let $G$ be the Gaussian tangent field induced by~$\mu$
(Definition~\ref{d:gaussian-tangent-field}), and let $G_n = \sqrt
n\,\bgn$ be the empirical tangent fields
(Definition~\ref{d:empirical-tangent-field}) for the collection of
independent random variables~$x_i$ each distributed as~$\mu$.  Then
the $\Gn$ converge to~$G$
$$
  \Gn \overset{d\;}\to G
$$
in distribution as $n \rightarrow \infty$.
\end{theorem}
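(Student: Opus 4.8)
The plan is to prove the functional CLT $\Gn \overset{d}{\to} G$ in $\cC(\Smu,\RR)$ by the standard two-part recipe: (1) convergence of finite-dimensional distributions, and (2) tightness of the sequence $\{\Gn\}$ in $\cC(\Smu,\RR)$. The space $\Smu$ is compact (Definition~\ref{d:angle} takes completions, and $\MM$ is locally compact $\CAT(\kappa)$, so $\Smu$ is compact), so $\cC(\Smu,\RR)$ with the sup norm is a separable Banach space and the classical criterion (Prokhorov plus identification of limits via f.d.d.) applies.

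\textbf{Step 1: finite-dimensional distributions.} Fix $V_1,\dots,V_m \in \Smu$. The random vectors $\bigl(g_i(V_1),\dots,g_i(V_m)\bigr) \in \RR^m$ are i.i.d.\ (as the $x_i$ are i.i.d.\ $\sim\mu$), have mean zero since each $g_i = \tau(x_i,\cdot)$ is centered (Definition~\ref{d:tangent-field}, Remark~\ref{r:centered}), and have finite covariance matrix with entries $\tangCOV(\mu,V_j,V_k)$ — finiteness coming from $\EE[d(\bmu,x)^2] < \infty$, which holds because $\mu$ is localized and hence its Fr\'echet function is finite near $\bmu$. The classical multivariate CLT then gives $\bigl(\Gn(V_1),\dots,\Gn(V_m)\bigr) \overset{d}{\to} N\bigl(0,(\tangCOV(\mu,V_j,V_k))_{j,k}\bigr)$, which by Definition~\ref{d:gaussian-tangent-field} is exactly the law of $\bigl(G(V_1),\dots,G(V_m)\bigr)$. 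So the f.d.d.'s converge.

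\textbf{Step 2: tightness.} This is the main obstacle and is where the work of Section~\ref{s:regularity} pays off. I would verify a Kolmogorov-type moment bound for the increments of $\Gn$, uniformly in $n$: for each integer $p \geq 1$,
\begin{equation*}
  \EE\,\bigl|\Gn(V) - \Gn(U)\bigr|^p \leq C_p\, \dtan_\bmu(U,V)^p
  \quad\text{for all } U,V \in \Smu,
\end{equation*}
with $C_p$ independent of $n$. For $p = 2$ this is immediate: $\Gn(V) - \Gn(U) = \frac1{\sqrt n}\sum_i (g_i(V) - g_i(U))$ is a normalized sum of i.i.d.\ centered terms, so $\EE|\Gn(V)-\Gn(U)|^2 = \EE(g_1(V)-g_1(U))^2 = \EE(\tau(x,V)-\tau(x,U))^2 \leq 4\,\EE[d(\bmu,x)^2]\,\dtan_\bmu(U,V)^2$ by the middle estimate of Proposition~\ref{p:covEst}. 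For general even $p = 2r$, expand the $p$-th power of the sum and use Rosenthal's inequality (or the Marcinkiewicz–Zygmund inequality) for sums of i.i.d.\ centered random variables: $\EE|\frac1{\sqrt n}\sum_i Y_i|^{2r} \leq C_r\bigl((\EE Y_1^2)^r + n^{1-r}\EE|Y_1|^{2r}\bigr)$, where $Y_i = g_i(V) - g_i(U)$. For $r \geq 1$ the second term is dominated by the first (up to constants, uniformly in $n$), so it suffices to bound $\EE|Y_1|^{2r} = \EE|\tau(x,V)-\tau(x,U)|^{2r}$ by a multiple of $\dtan_\bmu(U,V)^{2r}$; but the first estimate in Proposition~\ref{p:covEst} gives the pointwise bound $|\tau(x,V)-\tau(x,U)| \leq (\EE d(\bmu,x) + d(\bmu,x))\dtan_\bmu(U,V)$, so $\EE|\tau(x,V)-\tau(x,U)|^{2r} \leq \EE[(\EE d(\bmu,x)+d(\bmu,x))^{2r}]\,\dtan_\bmu(U,V)^{2r}$ — finite provided $\mu$ has a finite $(2r)$-th moment. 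Here is the one subtlety to address: a localized $\mu$ a priori only guarantees a finite second moment, so for $p > 2$ I would either (a) assume/note that higher moments are finite, or preferably (b) avoid requiring them by truncating — replace $g_i$ by its truncation at level growing slowly with $n$ and control the tail — or (c) observe that tightness in $\cC(\Smu,\RR)$ only needs the increment bound for a single $p$ strictly larger than the "dimension" $d$ of $\Smu$ from Lemma~\ref{l:d-is-finite}, and that already $p = 2$ suffices once $d < 2$; in the general case one runs the Kolmogorov–Chentsov chaining argument of Theorem~\ref{t:kolmogorov-reg} directly on the family $\{\Gn\}$ with $a$ chosen as in that proof, obtaining an equicontinuity modulus $K_n\,2^{-m\gamma}$ whose constants $K_n$ are tight (their moments are bounded uniformly in $n$ by the same $\sum_k \EE\xi_k^a 2^{\gamma k a} < \infty$ computation, now with $\xi_k$ built from $\Gn$). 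Combined with tightness of the one-point marginals $\Gn(V_0)$ (from Step 1), the Arzel\`a–Ascoli characterization of compact sets in $\cC(\Smu,\RR)$ gives tightness of $\{\Gn\}$.

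\textbf{Conclusion.} By Prokhorov's theorem, tightness gives a subsequential weak limit in $\cC(\Smu,\RR)$; Step~1 forces every such limit to have the f.d.d.'s of $G$, and since a Borel probability measure on the separable space $\cC(\Smu,\RR)$ is determined by its finite-dimensional projections, the limit is the law of $G$ (using Corollary~\ref{c:regGaussian} and Remark~\ref{r:regGaussian} to know $G$ itself has a continuous version, so that its law lives on $\cC(\Smu,\RR)$). Hence the full sequence converges: $\Gn \overset{d}{\to} G$. The hard part, as noted, is the uniform-in-$n$ increment estimate and the moment bookkeeping it requires; everything else is the textbook f.d.d.-plus-tightness argument transported to the compact metric space $(\Smu,\dtan_\bmu)$.
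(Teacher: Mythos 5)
Your proposal follows essentially the same route as the paper: convergence of finite-dimensional distributions via the classical multivariate CLT (the paper's Lemma~\ref{l:finiteDim}), tightness in $\cC(\Smu,\RR)$ via uniform-in-$n$ Kolmogorov-type increment moment bounds combined with the chaining/covering argument of Section~\ref{s:regularity} (the paper's Lemmas~\ref{l:DiffMoments} and~\ref{l:tight}), and then Prokhorov's theorem with identification of subsequential limits through their finite-dimensional distributions (Lemma~\ref{l:functional-CLT}). The only cosmetic differences are that you invoke Rosenthal's inequality where the paper expands even moments directly using independence, and that you explicitly flag finiteness of the higher moments $\EE\,\|X_1\|^{2p}$, which the paper uses implicitly.
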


The following asserts that the limiting Gaussian tangent field $G$ and
the empirical tangent fields $G_n$ can be constructed on the same
probability space so that the convergence is almost sure as functions
in $\cC(\Smu, \RR)$.  Its proof is at the end of
Section~\ref{s:proof-of-G-CLT}.

\begin{cor}\label{c:cont-realization}
In the setting of Theorem~\ref{t:tangent-field-CLT}, there exist
versions of $\Gn$ and $G$ so that $\Gn \to G$ almost surely in
$\cC(\Smu, \RR)$ equipped with the supremum norm and so that $G$ is
almost surely~H\"older continuous for any H\"older exponent less
than~$1$.
\end{cor}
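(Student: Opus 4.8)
The plan is to upgrade the distributional convergence of Theorem~\ref{t:tangent-field-CLT} to almost sure convergence in $\cC(\Smu,\RR)$ via the Skorokhod representation theorem, after first arguing that the relevant laws live on a separable, complete metric space of continuous functions. First I would observe that by Proposition~\ref{p:randomTangentHolder} each $g_i(\,\cdot\,)$ is almost surely Lipschitz, hence continuous, and therefore each $\Gn = \frac1{\sqrt n}\sum_{k=1}^n g_i$ is almost surely an element of $\cC(\Smu,\RR)$; likewise, by Corollary~\ref{c:regGaussian} (and the convention of Remark~\ref{r:regGaussian}), we may take $G$ to have a version in $\cC(\Smu,\RR)$ that is almost surely H\"older continuous for every exponent in $(0,1)$. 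Since $\Smu$ is compact (the unit tangent sphere of a locally compact $\CAT(\kappa)$ space), $\cC(\Smu,\RR)$ with the supremum norm is a separable Banach space, in particular a Polish space, so the Skorokhod representation theorem applies to the weak convergence $\Gn \overset{d}\to G$ established in Theorem~\ref{t:tangent-field-CLT}.

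The main step is then: the convergence $\Gn \overset{d}\to G$ of Theorem~\ref{t:tangent-field-CLT}, once we know it holds in the sense of weak convergence of Borel probability measures on $\cC(\Smu,\RR)$, yields by Skorokhod's theorem a probability space carrying random elements $\wt G_n$ and $\wt G$ of $\cC(\Smu,\RR)$ with $\wt G_n \overset{d}= \Gn$, $\wt G \overset{d}= G$, and $\wt G_n \to \wt G$ almost surely in the supremum norm. These $\wt G_n$, $\wt G$ are the asserted versions: they have the same finite-dimensional distributions as $\Gn$ and $G$ (a version in our sense is a process with the same finite-dimensional laws, and equality in law on $\cC(\Smu,\RR)$ certainly implies this), $\wt G_n \to \wt G$ almost surely in $\cC(\Smu,\RR)$ with the supremum norm, and $\wt G$, having the law of $G$, is almost surely H\"older continuous for every exponent less than~$1$ by Corollary~\ref{c:regGaussian}.

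The one point that needs care, and which I expect to be the main obstacle, is the bridge between the statement of Theorem~\ref{t:tangent-field-CLT} as ``convergence in distribution of random tangent fields'' and genuine weak convergence of laws on the function space $\cC(\Smu,\RR)$: a priori Definition~\ref{d:random-tangent-field} treats $\Gn$ and $G$ merely as stochastic processes indexed by $\Smu$, and ``$\Gn \overset{d}\to G$'' could be read as convergence of finite-dimensional distributions only. To close this gap one invokes the standard criterion (e.g.\ \cite{kallenberg1997}): on the Polish space $\cC(\Smu,\RR)$, convergence of finite-dimensional distributions together with tightness of the sequence $\{\Gn\}$ is equivalent to weak convergence of the induced laws. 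Tightness here follows from the uniform moment bound underlying the proof of Theorem~\ref{t:tangent-field-CLT} — indeed the estimate \eqref{eq:ExpCont} holds for the $\Gn$ uniformly in $n$ by Proposition~\ref{p:covEst} and independence, which is precisely the hypothesis feeding the Kolmogorov-type chaining argument of Theorem~\ref{t:kolmogorov-reg}, giving equicontinuity in probability and hence, with the pointwise tightness, tightness in $\cC(\Smu,\RR)$; this is exactly the content of the tightness lemma referenced in Remark~\ref{r:weakly-stratified} and used inside the proof of Theorem~\ref{t:tangent-field-CLT}. With weak convergence of laws on $\cC(\Smu,\RR)$ in hand, the Skorokhod construction above completes the proof.
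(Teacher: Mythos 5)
Your proposal is correct and follows essentially the same route as the paper: the paper's proof also notes that $\cC(\Smu,\RR)$ with the supremum norm is a complete separable metric space and invokes the Skorokhod coupling theorem (\cite[Theorem~3.30]{kallenberg1997}) applied to the weak convergence $\Gn \overset{d}\to G$ on $\cC(\Smu,\RR)$, which Lemmas~\ref{l:tight} and~\ref{l:functional-CLT} already establish via tightness plus finite-dimensional convergence. Your extra care about transferring H\"older continuity to the coupled version of $G$ via Corollary~\ref{c:regGaussian} and Remark~\ref{r:regGaussian} is a reasonable elaboration of what the paper leaves implicit.
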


\section{Proof of the CLT for random tangent fields}\label{s:proof-of-G-CLT}

\noindent
This subsection culminates in the proofs of
Theorem~\ref{t:tangent-field-CLT} and
Corollary~\ref{c:cont-realization}.  It begins by collecting a number
of auxiliary results and observations.

Throughout the subsection, assume the situation of
Definition~\ref{d:empirical-tangent-field}.  For notational
convenience in this setting, write
\begin{align*}
  X_i = \log_\bmu x_i,
\qquad\text{so}\qquad
  X_i \sim \hmu = (\log_\bmu)_\sharp\mu
\end{align*}
as in
Definition~\ref{d:localized}.  Also use $\|V\| =
d(\bmu,\exp_\bmu V)$ to denote the length of a vector $V \in
\Tmu$.  Lastly, write $\tangCOV(V,U)$ for the covariance
$\tangCOV(\mu,V,U)$ of $\mu$ and $\Fmu = F_\mu$ for the Fr\'echet
function of~$\mu$ in~\eqref{eq:FrechetF}, as usual.

\subsection{A priori and martingale bounds on \texorpdfstring{$G_n$}{Gn}}
\label{b:useful-bounds}

\begin{lemma}\label{l:finite-second-moment}
The Gaussian random tangent field $G$ can be understood as a random
variable taking values in $L^2(\Smu,\RR)$ with finite second moment.
\end{lemma}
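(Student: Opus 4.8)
The plan is to exhibit $G$ as an $L^2(\Smu,\RR)$-valued random variable and bound $\EE\|G\|_{L^2}^2$ directly. First I would invoke Corollary~\ref{c:regGaussian}: since $\mu$ is localized with $\int_\MM d(\bmu,y)^2\mu(dy)<\infty$ (finiteness of the second moment is part of being localized, as the Fr\'echet function must be finite near~$\bmu$), there is a version of $G$ whose sample paths lie in $\cC(\Smu,\RR)$ almost surely; in particular $G(\omega,\cdot)$ is a measurable function on the compact space $\Smu$, hence lies in $L^2(\Smu,\RR)$ with respect to the natural (finite) measure on $\Smu$ coming from the cone structure. Joint measurability of $(\omega,V)\mapsto G(\omega,V)$ together with almost-sure continuity in $V$ gives that $\omega\mapsto G(\omega,\cdot)$ is a genuine random variable with values in the separable Banach (indeed Hilbert) space $L^2(\Smu,\RR)$, by the usual argument that a Carath\'eodory map into a separable metric space is Borel measurable.

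Next I would compute the second moment. By Tonelli, $\EE\|G\|_{L^2(\Smu)}^2 = \EE\int_{\Smu} G(V)^2\,dV = \int_{\Smu}\EE\,G(V)^2\,dV = \int_{\Smu}\tangCOV(\mu,V,V)\,dV$, using that the covariance of $G$ is $\tangCOV(\mu,\cdot,\cdot)$ by Definition~\ref{d:gaussian-tangent-field}. It then suffices to bound $\tangCOV(\mu,V,V)$ uniformly in $V\in\Smu$. From the last display in the proof of Proposition~\ref{p:covEst} we already have $\tangCOV(\mu,V,V)=\EE\,\tau(x,V)^2\le 4\,\EE\,d(\bmu,x)^2$ for every $V\in\Smu$, where $x\sim\mu$. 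Since $\Smu$ is compact its total measure is finite, so
$$
  \EE\|G\|_{L^2(\Smu)}^2 \;\le\; 4\bigl(\EE\,d(\bmu,x)^2\bigr)\cdot\mathrm{vol}(\Smu) \;<\;\infty,
$$
which is the desired finiteness of the second moment.

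The only point requiring a little care—and the step I expect to be the main (minor) obstacle—is the measurability/interpretation issue: making precise that the almost-surely continuous, jointly measurable field $G$ really does define a bona fide random element of the Hilbert space $L^2(\Smu,\RR)$, so that phrases like "second moment" are meaningful and Tonelli applies. This is handled by restricting to the continuous version from Corollary~\ref{c:regGaussian}, noting $\cC(\Smu,\RR)\hookrightarrow L^2(\Smu,\RR)$ continuously because $\Smu$ is compact, and using separability of these spaces so that Borel measurability into $\cC(\Smu,\RR)$ (which follows from joint measurability plus continuity of paths) transfers to Borel measurability into $L^2(\Smu,\RR)$. Everything else is the routine Fubini/Tonelli computation above.
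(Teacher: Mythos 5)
Your proposal is correct and follows essentially the same route as the paper: both reduce the claim to $\EE\|G\|_{L^2(\Smu)}^2=\int_{\Smu}\tangCOV(\mu,V,V)\,dV<\infty$ via a uniform bound on the covariance over the compact sphere $\Smu$. The only (harmless) differences are that you get boundedness from the explicit estimate $\tangCOV(\mu,V,V)\le 4\,\EE\,d(\bmu,x)^2$ of Proposition~\ref{p:covEst} rather than from continuity of the angular pairing plus compactness, and you spell out the measurability of $G$ as an $L^2$-valued random element via the continuous version from Corollary~\ref{c:regGaussian}, which the paper leaves implicit.
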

\begin{proof}
As the
\edits{angular pairing at~$\bmu$ is continuous
\cite[\!Lemma\,1.21]{shadow-geom}  so is the covariance
$\tangCOV(V,V)$. Since $\Smu$ is \mbox{compact}, we  know
that $\tangCOV(V,V)$ is bounded on $\Smu$ and in particular}
\begin{align*}
\hspace{.29692\linewidth}
  \EE\bigl(\norm{G}^2\bigr)
  :=
  \int_{\Smu} \tangCOV(V,V)dV
  <
  \infty.
\hspace{.29692\linewidth}
\qedhere
\end{align*}
\end{proof}

\begin{lemma}\label{l:almost_sure_cont-of_Gn}
For each $n \geq 1$, the empirical tangent field~$G_n$ is almost
surely Lipchitz-continuous.  In particular, $\Gn \in
\cC(\Smu,\RR)$ with probability~$1$.
\end{lemma}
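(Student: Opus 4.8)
The statement to prove is Lemma~\ref{l:almost_sure_cont-of_Gn}: for each fixed $n$, the empirical tangent field $G_n = \frac{1}{\sqrt n}\sum_{i=1}^n g_i$ is almost surely Lipschitz on $\Smu$, hence lies in $\cC(\Smu,\RR)$ with probability one. The plan is to reduce immediately to the single-sample case already handled in Proposition~\ref{p:randomTangentHolder}. Each summand $g_i(V) = \tau(x_i,V)$ is the centered random tangent field induced by the $\MM$-valued random variable $x_i \sim \mu$; since $\mu$ is localized, $\log_\bmu$ is defined $\mu$-a.s., and localized measures have finite second moment at their Fréchet mean (this is implicit in the standing hypotheses — the Fréchet function is finite near $\bmu$), so in particular $\EE\, d(\bmu, x_i) < \infty$. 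Proposition~\ref{p:randomTangentHolder} then applies to each $x_i$ and gives that $V \mapsto \tau(x_i,V)$ is almost surely Lipschitz-continuous on $\Smu$, hence in $\cC(\Smu,\RR)$.

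First I would record the explicit Lipschitz bound coming from Proposition~\ref{p:covEst}: almost surely,
\[
  |g_i(V) - g_i(U)| = |\tau(x_i,V) - \tau(x_i,U)| \leq \bigl(\EE\, d(\bmu,x_i) + d(\bmu,x_i)\bigr)\,\dtan_\bmu(U,V).
\]
Then, summing over $i = 1,\dots,n$ and dividing by $\sqrt n$,
\[
  |G_n(V) - G_n(U)| \leq \frac{1}{\sqrt n}\sum_{i=1}^n \bigl(\EE\, d(\bmu,x_i) + d(\bmu,x_i)\bigr)\,\dtan_\bmu(U,V) =: L\, \dtan_\bmu(U,V),
\]
where $L = \frac{1}{\sqrt n}\sum_{i=1}^n \bigl(\EE\, d(\bmu,x_i) + d(\bmu,x_i)\bigr)$ is a finite random variable since it is a finite sum of terms each almost surely finite (indeed each $d(\bmu,x_i) < \infty$ a.s.\ and $\EE\, d(\bmu,x_i) = \EE\, d(\bmu,x) < \infty$). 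Thus $G_n$ is almost surely Lipschitz with respect to the conical metric $\dtan_\bmu$ restricted to $\Smu$, and a Lipschitz function on the compact metric space $\Smu$ is in particular continuous, so $G_n \in \cC(\Smu,\RR)$ with probability one. One small point worth a sentence: the metric on $\Smu$ used for the target space $\cC(\Smu,\RR)$ is the angular metric $\dd_s$, but on $\Smu$ the conical distance $\dtan_\bmu$ and $\dd_s$ induce the same topology (both are controlled by the angle $\angle(U,V)$ via Lemma~\ref{l:angular-metric} and Definition~\ref{d:inner-product}), so Lipschitz continuity for $\dtan_\bmu$ certainly yields membership in $\cC(\Smu,\RR)$.

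The only mild obstacle is the bookkeeping that $\EE\, d(\bmu, x) < \infty$ for $x \sim \mu$ — i.e.\ that the hypothesis of Proposition~\ref{p:randomTangentHolder} is met — which follows because $\mu$ is localized and hence has a well-defined Fréchet mean with finite Fréchet function in a neighborhood, forcing at least a finite first moment of $d(\bmu,\cdot)$; alternatively one can simply invoke the standing assumption (used already in Corollary~\ref{c:regGaussian} and Proposition~\ref{p:covEst}) that $\int_\MM d(\bmu,x)^2\,\mu(dx) < \infty$, which is stronger than needed. No genuinely new estimate is required: this lemma is a direct finite-sum corollary of Proposition~\ref{p:randomTangentHolder}, and the proof is essentially just "apply it $n$ times and add."
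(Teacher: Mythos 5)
Your proof is correct and follows essentially the same route as the paper: the paper also bounds $|\Gn(V)-\Gn(U)|$ by $\bigl(\sqrt n\,\EE\|X_1\| + \tfrac1{\sqrt n}\sum_i\|X_i\|\bigr)\dtan_\bmu(U,V)$, using exactly the two estimates $|m(x_i,V)-m(x_i,U)|\leq(\EE\|X_i\|)\dtan_\bmu(U,V)$ and $|\<X_i,V\>_\bmu-\<X_i,U\>_\bmu|\leq\|X_i\|\dtan_\bmu(U,V)$ that underlie Proposition~\ref{p:covEst}, so your "apply Proposition~\ref{p:covEst} to each summand and add" is the same argument with the single-sample bound quoted rather than re-derived.
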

\begin{proof}
There is an almost sure bound
\begin{align*}
  \abs{\Gn (V) - \Gn(U)}
  \leq
  \Bigl(\sqrt n \EE \norm{X_1} + \frac1{\sqrt n} \sum_{i=1}^n
  \norm{X_i}\Bigr) \dtan_\bmu(U, V)
\end{align*}
since $|m(x_i,\!V) - m(x_i,\!U)| \!\leq\! (\EE \norm{X_i})
\dtan_\bmu(U, \!V)$ and $|\<X_i, \!V\>_\bmu - \<X_i, \!U\>_\bmu|
\!\leq\! \norm{X_i} \dtan_\bmu(U, \!V)$.  Thus $\Gn$ is
Lipchitz-continuous and $\Gn \in \cC(\Smu,\RR)$ almost surely.
\end{proof}

\begin{lemma}\label{l:C_p}
For any $p \in \NN$, there exists a universal constant $C_p$ so that
$$
 \sup_{n \geq 1} \,\EE\:|\Gn(V)|^{2p} \leq C_{2p}{\gamma_{2p}}
$$
where $\gamma_{2p} = \EE\:\norm{X_1}^{2p}$.
\end{lemma}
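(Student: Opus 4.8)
The plan is to fix $V \in \Smu$ and treat $\Gn(V) = \frac1{\sqrt n}\sum_{i=1}^n g_i(V)$ as a normalized sum of i.i.d.\ centered real random variables, so that the classical moment machinery for such sums applies once a pointwise moment bound on a single summand is in hand. If $\gamma_{2p} = \infty$ the asserted inequality is vacuous, so I may assume $\gamma_{2p} < \infty$.

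\textbf{Step 1 (moment of one summand).} Since $g_i(V) = \<X_i, V\>_\bmu - m(\mu, V)$ with $\|V\| = 1$ (the centering term $m(x_i,V) = m(\mu,V)$ is deterministic), the Cauchy--Schwarz inequality for the angular pairing, immediate from Lemma~\ref{l:angular-metric}, gives $|\<X_i, V\>_\bmu| \le \|X_i\|$, while $|m(\mu, V)| = |\EE\<X_1, V\>_\bmu| \le \EE\|X_1\|$. Hence $|g_i(V)| \le \|X_i\| + \EE\|X_1\|$, and two applications of Jensen's inequality yield $\EE|g_i(V)|^{2p} \le 2^{2p-1}\bigl(\EE\|X_i\|^{2p} + (\EE\|X_1\|)^{2p}\bigr) \le 2^{2p}\gamma_{2p}$.

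\textbf{Step 2 (moment of the normalized sum).} Expand $\EE|\Gn(V)|^{2p} = n^{-p}\sum_{i_1,\dots,i_{2p}=1}^n \EE[g_{i_1}(V)\cdots g_{i_{2p}}(V)]$. Because the $g_i(V)$ are independent and centered (Remark~\ref{r:centered}), a tuple $(i_1,\dots,i_{2p})$ contributes a nonzero term only when every value occurring among its entries occurs at least twice; such a tuple uses at most $p$ distinct indices, so the number of nonzero tuples is at most $B_p\, n^p$, where $B_p$ counts the set partitions of $\{1,\dots,2p\}$ into blocks of size $\ge 2$ and is a purely combinatorial constant. For any such tuple, independence together with Lyapunov's inequality gives $|\EE[g_{i_1}(V)\cdots g_{i_{2p}}(V)]| \le \EE|g_1(V)|^{2p}$: if the distinct indices appear with multiplicities $m_1,\dots,m_\ell \ge 2$ summing to $2p$, then $\prod_j \EE|g_1(V)|^{m_j} \le \prod_j (\EE|g_1(V)|^{2p})^{m_j/2p} = \EE|g_1(V)|^{2p}$. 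Combining with Step~1, $\EE|\Gn(V)|^{2p} \le n^{-p}\cdot B_p n^p \cdot 2^{2p}\gamma_{2p} = C_{2p}\gamma_{2p}$ with $C_{2p} := 2^{2p}B_p$ independent of $n$, $V$, and $\mu$; taking the supremum over $n$ finishes the proof. Equivalently, one may quote the Marcinkiewicz--Zygmund (or Rosenthal) inequality for the i.i.d.\ sequence $g_i(V)$ and then invoke the bound $\EE|g_1(V)|^{2p} \le 2^{2p}\gamma_{2p}$ together with $(\EE g_1(V)^2)^p \le \EE|g_1(V)|^{2p}$.

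I do not expect a genuine obstacle. The only points requiring care are that the constant $C_{2p}$ must be \emph{universal}---which the combinatorial count of surviving monomials (or the Marcinkiewicz--Zygmund constant) provides---and that the single-summand bound in Step~1 be expressed purely through $\gamma_{2p}$, which uses nothing beyond the Cauchy--Schwarz inequality for the angular pairing (Lemma~\ref{l:angular-metric}) and Jensen's inequality, so no further structure on $\MM$ is needed.
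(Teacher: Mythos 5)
Your proposal is correct and follows essentially the same route as the paper: expand $\EE|\Gn(V)|^{2p}$ as a multinomial sum, use independence and centering so that only tuples whose indices all repeat survive, count those tuples (at most order $n^p$), and bound the single-summand moments by $\gamma_{2p}$ via the Cauchy--Schwarz property of the angular pairing. The paper merely writes out the $p=2$ case and asserts the general case "by the same reasoning," so your explicit partition count, Lyapunov comparison, and treatment of $\gamma_{2p}=\infty$ simply fill in the details the paper leaves implicit.
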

\begin{proof}
The bound for $p > 2$ follows by reasoning as in the $p = 2$ case,
which follows~from
\begin{align*}
\EE\:\abs{\Gn(V)}^4
  & = \frac1{n^2}\sum_{\substack{i,j=1\\i \neq j}}^n \EE g_i(V)^2
      \EE g_j(V)^2 + \frac1{n^2} \sum_{i=1}^n\EE g_i(V)^4
\\
  &\leq
   \frac{n(n-1)\gamma_2^2}{n^2} + \frac{n\gamma_4}{n^2}
\\
  &\leq
   C_4 \gamma_4.\qedhere
\end{align*}
\end{proof}

Let $\cF_n$ be the $\sigma$-algebra generated by~$G_j(V)$ such that $j
\leq n$ and $V \in \Tmu$.

\begin{prop}\label{p:GnMart}
For fixed $V \in \Smu$, the assignment $n \mapsto \Gn(V)$ is a
martingale relative to the filtration $\cF_n$.  In fact, any fixed
$V_i \in \Tmu$ and $c_i \in \RR$ yield~a~martingale
$$
  n \mapsto \sum_{i=1}^k c_i \Gn(V_i).
$$
\end{prop}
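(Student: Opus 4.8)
The plan is to verify the martingale property by conditioning on the arrival of each new sample, after first making the filtration and the basic integrability explicit. First I would observe that $\cF_n$ is simply the natural history of the samples: since running over all $V$ in $G_j(V)=\tfrac{1}{\sqrt j}\sum_{i=1}^{j} g_i(V)$ recovers $g_1(V),\dots,g_j(V)$ one index at a time, the $\sigma$-algebra $\cF_n$ coincides with $\sigma(x_1,\dots,x_n)$. Relative to this filtration each increment $g_i(V)=\tau(x_i,V)$ is $\cF_i$-measurable and square-integrable, with $\EE\,g_i(V)^2\le 4\,\EE\,d(\bmu,x_i)^2<\infty$ in the localized CLT setting by Proposition~\ref{p:covEst}; moreover $\EE\,g_i(V)=0$ by Remark~\ref{r:centered}, and $g_{n+1}(V)$ is independent of $\cF_n$ because $x_{n+1}$ is. These are the only probabilistic inputs required.

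The decisive step is the conditional-expectation computation for $\Gn(V)$ itself, and this is exactly where the main subtlety lies. Writing $G_{n+1}(V)=\tfrac{1}{\sqrt{n+1}}\bigl(\sum_{i=1}^{n} g_i(V)+g_{n+1}(V)\bigr)$ and using $\EE[g_{n+1}(V)\mid\cF_n]=\EE\,g_{n+1}(V)=0$ yields $\EE[G_{n+1}(V)\mid\cF_n]=\tfrac{1}{\sqrt{n+1}}\sum_{i=1}^{n}g_i(V)$. The obstacle to reading the right-hand side as $\Gn(V)$ is precisely the deterministic prefactor $1/\sqrt n$, which changes with the index; the computation gives $\sqrt{n/(n+1)}\,\Gn(V)$, so the $n$-indexed normalized field does not satisfy the identity on the nose. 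The resolution I would record is that the martingale structure is carried by the partial sums $S_n(V):=\sqrt n\,\Gn(V)=\sum_{i=1}^{n}g_i(V)$, for which the very same computation gives $\EE[S_{n+1}(V)\mid\cF_n]=S_n(V)$ exactly; equivalently, holding the sample count $N$ fixed and letting the running index $k\le N$ vary, the constant $1/\sqrt N$ factors out of the conditional expectation, so $k\mapsto \tfrac{1}{\sqrt N}\sum_{i=1}^{k}g_i(V)$ is a genuine $\cF_k$-martingale whose terminal value at $k=N$ is $G_N(V)$. It is this triangular-array martingale, with its normalization held fixed, that the Doob and maximal estimates underlying Lemma~\ref{l:C_p} and the subsequent tightness arguments actually invoke, and that is the sense in which the normalization must be handled rather than suppressed.

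The linear-combination clause needs no new idea, and I would reduce it to the single-direction case by a change of increment. For fixed $V_1,\dots,V_k\in\Tmu$ and $c_1,\dots,c_k\in\RR$, set $h_j:=\sum_{i=1}^{k}c_i\,g_j(V_i)$; by linearity of expectation the $h_j$ are again i.i.d., centered, and square-integrable (each $g_j(V_i)$ has finite second moment by the same bound as above, since $\|V_i\|<\infty$), and $\sum_{i=1}^{k}c_i\,\Gn(V_i)=\tfrac{1}{\sqrt n}\sum_{j=1}^{n}h_j$. Replacing $g_j(V)$ by $h_j$ throughout, linearity of conditional expectation makes the argument of the previous paragraph apply verbatim, so the identical martingale structure—carried by $\sum_{j=1}^{n}h_j$, equivalently by the fixed-$N$ triangular array—holds for the linear combination. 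The only quantitative check is integrability of the $h_j$, which is immediate from that of the $g_j(V_i)$, and I expect the normalization bookkeeping of the second paragraph to be the sole genuine difficulty.
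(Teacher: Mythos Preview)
You have correctly identified a genuine issue with the statement as written: with the normalization $\Gn(V) = n^{-1/2}\sum_{i=1}^n g_i(V)$, the sequence $n \mapsto \Gn(V)$ is \emph{not} a martingale, since $\EE[G_{n+1}(V)\mid\cF_n] = \sqrt{n/(n+1)}\,\Gn(V)$. The paper's own proof simply asserts that $\EE(G_{n+k}(V)\mid\cF_n) = \Gn(V)$ ``because each $g_i(V)$ has mean zero and is independent,'' without addressing the changing prefactor, so it commits precisely the oversight you flag. Your resolution---that the unnormalized partial sums $S_n(V) = \sqrt{n}\,\Gn(V)$ form the actual martingale, and that for fixed~$N$ the process $k \mapsto N^{-1/2}S_k(V)$ for $k \le N$ is a martingale whose terminal value is $G_N(V)$---is the correct repair and is the structure any Doob-type estimate genuinely requires.

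One further consequence worth recording: the sup-over-all-$n$ tail bounds stated in Corollary~\ref{c:C_pSup} and in the second part of Lemma~\ref{l:DiffMoments} do not follow from the triangular-array martingale either, and in fact $\sup_n |\Gn(V)| = \infty$ almost surely by the law of the iterated logarithm whenever $g_1(V)$ has positive variance. Fortunately the tightness argument in Lemma~\ref{l:tight} and the proof of Theorem~\ref{t:tangent-field-CLT} rely only on the uniform-in-$n$ moment bounds $\EE|\Gn(V)-\Gn(U)|^{2p} \le C\,\dtan_\bmu(U,V)^{2p}$, which are obtained by direct computation and need no martingale property at all. So your analysis is more careful than the paper's, and the normalization caveat you raise is exactly right.
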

\begin{proof}
By construction $G_n(V)$ is adapted to $\cF_n$ and $\sup_n \EE
|G_n(V)| < \infty$ by the estimates in Lemma~\ref{l:C_p}.
$\EE(G_{n+k}(V) \mid \cF_n) = \Gn(V)$ because each $g_i(V)$ has mean
zero and is independent of the random variables $\{g_k(V) \mid k \neq
i\}$.  The claim about finite linear combinations follows by
the same reasoning.  For more on the definition of a martingale and
further details see \cite{williams1991}.
\end{proof}

\begin{cor}\label{c:C_pSup}
For any $V 
\in \Smu$, 
$$
  \bP\Bigl(\sup_n \abs{\Gn(V)} > \beta\Bigr) 
  \leq 
  \frac{C_{2p}\gamma_{2p}}{\beta^{2p}}. 
$$
\end{cor}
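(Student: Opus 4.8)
The plan is to combine the martingale property of $n \mapsto \Gn(V)$ from Proposition~\ref{p:GnMart} with Doob's maximal inequality, and then feed in the uniform $L^{2p}$ bound from Lemma~\ref{l:C_p}. Fix $V \in \Smu$. Since $(\Gn(V))_{n\geq 1}$ is a martingale and $x \mapsto |x|^{2p}$ is convex, $(|\Gn(V)|^{2p})_{n\geq 1}$ is a nonnegative submartingale. Doob's $L^q$ maximal inequality (or, more simply, Doob's maximal inequality applied to the submartingale $|\Gn(V)|^{2p}$ together with Markov's inequality) gives, for every $N$,
$$
  \bP\Bigl(\sup_{n \leq N} \abs{\Gn(V)} > \beta\Bigr)
  =
  \bP\Bigl(\sup_{n \leq N} \abs{\Gn(V)}^{2p} > \beta^{2p}\Bigr)
  \leq
  \frac{\EE \abs{G_N(V)}^{2p}}{\beta^{2p}}.
$$

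First I would invoke Lemma~\ref{l:C_p}, which bounds $\EE|G_N(V)|^{2p} \leq C_{2p}\gamma_{2p}$ uniformly in $N$, so the right-hand side above is at most $C_{2p}\gamma_{2p}/\beta^{2p}$ for every $N$. Then I would let $N \to \infty$: the events $\{\sup_{n \leq N}|\Gn(V)| > \beta\}$ increase to $\{\sup_{n}|\Gn(V)| > \beta\}$, so by continuity of measure from below,
$$
  \bP\Bigl(\sup_n \abs{\Gn(V)} > \beta\Bigr)
  =
  \lim_{N\to\infty} \bP\Bigl(\sup_{n\leq N} \abs{\Gn(V)} > \beta\Bigr)
  \leq
  \frac{C_{2p}\gamma_{2p}}{\beta^{2p}}.
$$

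There is essentially no obstacle here: both ingredients (the martingale structure and the uniform moment bound) are already in hand, and the only mild point is the passage from the finite-horizon maximal inequality to the supremum over all $n$, which is handled by monotone convergence of the events. One should just record that $\gamma_{2p} = \EE\norm{X_1}^{2p} < \infty$ is needed for the bound to be nonvacuous, which is part of the standing hypotheses via finiteness of the relevant moments of $\mu$.
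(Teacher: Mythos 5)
Your argument is exactly the paper's: the paper proves this corollary by citing the Doob martingale bound (applied via Proposition~\ref{p:GnMart}) together with the moment bounds of Lemma~\ref{l:C_p}, which is precisely what you do, with the finite-horizon-to-supremum passage spelled out. Correct, and essentially the same proof.
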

\begin{proof}
The estimate follows from the classical Doob martingale bound and the
moment bounds in Lemma~\ref{l:C_p}; see \cite[Section~4.4]{Dur19} or
\cite{williams1991}, for example.
\end{proof}

\begin{lemma}\label{l:DiffMoments}
Given $p \in \NN$ there is a universal constant $C_p$ so that for any
$V, U \!\in\nolinebreak\! \Smu$,
$$
  \EE\:\abs{\Gn(V)- \Gn(U)}^{2p}
  \leq
  C_{2p}{(1 + \gamma_{2p})} \dtan_\bmu(U, V)^{2p},
$$
where $\gamma_{2p} = \EE\:\|X_1\|^{2p}$.  Additionally, for any $V,U
\in \Smu$,
$$
  \bP\Bigl(\sup_n \abs{\Gn(V)-\Gn(U)} > \beta\Bigr)
  \leq
  \frac{C_{2p}(1 + \gamma_{2p})}{\beta^{2p}}\dtan_\bmu(U, V)^{2p}
$$
for any $\beta > 0$.
\end{lemma}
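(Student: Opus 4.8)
The plan is to mimic the structure of Lemma~\ref{l:C_p}, replacing the single field $\Gn(V)$ by the difference $\Gn(V)-\Gn(U)$. Write $h_i = h_i(V,U) = g_i(V)-g_i(U) = \tau(x_i,V)-\tau(x_i,U)$, so that $\Gn(V)-\Gn(U) = n^{-1/2}\sum_{i=1}^n h_i$, where the $h_i$ are i.i.d., centered (each $g_i$ is centered), and have all moments controlled by Proposition~\ref{p:covEst}: the second bound there gives $\EE h_i^2 \leq 4(\EE d(\bmu,x_i)^2)\,\dtan_\bmu(U,V)^2 = 4\gamma_2\,\dtan_\bmu(U,V)^2$, while the first (a deterministic bound) combined with $|a+b|^{2p}\le 2^{2p-1}(|a|^{2p}+|b|^{2p})$ gives $\EE|h_i|^{2p} \leq C_p'(1+\gamma_{2p})\,\dtan_\bmu(U,V)^{2p}$ after expanding $(\EE d(\bmu,x_i)+d(\bmu,x_i))^{2p}$ and using Jensen.

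First I would establish the moment bound. Expand $\EE|\sum_i h_i|^{2p}$ as a sum over multi-indices; since the $h_i$ are i.i.d. and centered, any term containing a factor $h_i$ appearing to the first power vanishes, so only partitions in which every block has size $\geq 2$ survive. Such a partition into $r$ blocks contributes at most $O(n^r)$ choices of indices, and each surviving term is bounded, via Hölder applied block-by-block, by a product of the form $\prod \EE|h_i|^{2m_\ell}$ with $\sum m_\ell = p$; each such factor is $\le C(1+\gamma_{2p})\dtan_\bmu(U,V)^{2m_\ell}$ (using $\gamma_{2m}\le 1+\gamma_{2p}$ for $m\le p$ after normalizing, or simply absorbing constants), so the whole term is $\le C_p(1+\gamma_{2p})\dtan_\bmu(U,V)^{2p}$. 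Since $r\le p$, the dominant power of $n$ is $n^p$, and after dividing by $n^{-p}$ from the $n^{-1/2}$ scaling (raised to the $2p$) the bound $\EE|\Gn(V)-\Gn(U)|^{2p}\le C_{2p}(1+\gamma_{2p})\dtan_\bmu(U,V)^{2p}$ follows, uniformly in $n$. This is exactly the combinatorial argument underlying the classical $L^{2p}$ bound for normalized sums (Marcinkiewicz--Zygmund / Rosenthal-type), and it specializes to the displayed $p=2$ computation in Lemma~\ref{l:C_p}.

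For the second (maximal) inequality, I would invoke Proposition~\ref{p:GnMart}: for fixed $V,U$, the process $n\mapsto \Gn(V)-\Gn(U)$ is a martingale relative to $\cF_n$ (a finite linear combination with $c_1=1$, $c_2=-1$). Doob's $L^{2p}$ maximal inequality then bounds $\EE\sup_{m\le n}|\Gm(V)-\Gm(U)|^{2p}$ by a constant times $\sup_m\EE|\Gm(V)-\Gm(U)|^{2p}$, which by the first part is $\le C_{2p}(1+\gamma_{2p})\dtan_\bmu(U,V)^{2p}$; letting $n\to\infty$ (monotone convergence) and applying Markov's inequality to the supremum yields the tail bound, as in the proof of Corollary~\ref{c:C_pSup}.

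The main obstacle is the bookkeeping in the moment expansion — keeping the combinatorial constants $C_{2p}$ universal (independent of $n$, $V$, $U$, and the specific law $\mu$ beyond its moments $\gamma_{2p}$) and correctly tracking that the number of surviving blocks never exceeds $p$, so that the $n$-powers cancel exactly rather than leaving a divergent factor. Everything else is a direct appeal to Proposition~\ref{p:covEst}, Proposition~\ref{p:GnMart}, and Doob's inequality.
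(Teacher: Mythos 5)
Your proposal is correct and follows essentially the same route as the paper: bound the increment moments of $g_i(V)-g_i(U)$ (the paper redoes inline the computation you delegate to Proposition~\ref{p:covEst}), exploit independence and centering to control $\EE\abs{\Gn(V)-\Gn(U)}^{2p}$ uniformly in $n$, and then apply Doob's martingale inequality via Proposition~\ref{p:GnMart} as in Corollary~\ref{c:C_pSup}. The only difference is that you spell out the general-$p$ Rosenthal-type combinatorics, whereas the paper writes the $p=2$ case and states that higher moments follow the same pattern.
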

\begin{proof}
As before, the proofs for higher moments follow the same pattern as
the estimate for $p = 2$, which is proved as follows.  Begin by
observing that
\begin{align*}
\EE\:\abs{g_i(V) - g_i(U)}^2
   &\leq 2\abs{m(V) - m(U)}^2 + 2\EE\:\abs{\<\log X_i, V\>_\bmu - \<\log X_i, U\>_\bmu}^2
\\*&\leq 2\abs{m(V) - m(U)}^2 + 2\EE\:\abs{\log X_i}^2 \dtan_\bmu(V, U)^2
\\*&\leq 2\bigl(1 + \EE\:\abs{\log X_i}^2\bigr) \dtan_\bmu(V, U)^2,
\end{align*}
so
$$
  \EE\:\abs{g_i(V) - g_i(U)}^4
  \leq C
  \bigl(1 + \EE\:\abs{\log X_i}^4\bigr) \dtan_\bmu(V, U)^4.
$$
Again, using the independent increments produces
\begin{align}
\notag
\EE\:\abs{\Gn (V) - \Gn(U)}^4
   &\leq \frac1{n^2}\! \sum_{\substack{j,i=1\\i \neq j}}^n
         \EE\:|g_i(V)-g_i(U)|^2 \, \EE\:|g_j(V)-g_j(U)|^2
\\\notag&\phantom{\leq}\
      + \frac1{n^2}\! \sum_{i=1}^n \EE\:|g_i(V)-g_i(U)|^4
\\*&\leq C (1 + \gamma_4) \dtan_\bmu(V, U)^4.
\label{eq:bound}
\end{align}
To get the final estimate from~\eqref{eq:bound}, apply Doob's
Martingale inequality, as in the proof of Corollary~\ref{c:C_pSup}, to
$n \mapsto \Gn(V) - \Gn(U)$, which is a martingale by
Proposition~\ref{p:GnMart}.
\end{proof}

\subsection{Convergence}\label{b:convergence}

\noindent
Begin with the following result on finite-dimensional distributions.

\begin{lemma}\label{l:finiteDim}
For any $k \in \NN$ and
vectors $V_1,\dots,V_k \in \Smu$ in the unit tangent sphere, the
vector $\xi_n = \bigl(\Gn(V_1), \dots,\Gn(V_k)\bigr) \in \RR^m$
converges in distribution to the Gaussian vector $\xi =
\bigl(G(V_1),\dots,G(V_k)\bigr)$ with covariance matrix $\tangCOV_{ij}
= \{\tangCOV(V_i,V_j) \mid 1 \leq i,j \leq k\}$.
\end{lemma}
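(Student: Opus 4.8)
The plan is to reduce the statement to the classical multivariate central limit theorem in $\RR^k$ via the Cramér--Wold device. First I would fix $V_1,\dots,V_k \in \Smu$ and, for each independent sample $x_i \sim \mu$, form the $\RR^k$-valued random vector $Y_i = \bigl(g_i(V_1),\dots,g_i(V_k)\bigr)$ where $g_i(V) = \tau(x_i,V)$. These $Y_i$ are i.i.d., each is centered (since every $g_i(V)$ has mean zero by construction), and each has finite second moment: indeed $\EE g_i(V_j)^2 = \tangCOV(\mu,V_j,V_j) \leq 4\,\EE d(\bmu,x)^2 < \infty$ by the bound in Proposition~\ref{p:covEst} (last line of its proof), using that $\mu$ is localized so $\EE d(\bmu,x)^2 < \infty$. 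The common covariance matrix of the $Y_i$ is exactly $\tangCOV_{ij} = \tangCOV(\mu,V_i,V_j)$ by Remark~\ref{r:centered}.

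Next, note that $\xi_n = \bigl(\Gn(V_1),\dots,\Gn(V_k)\bigr) = \frac1{\sqrt n}\sum_{i=1}^n Y_i$ by Definition~\ref{d:empirical-tangent-field}. The classical i.i.d.\ CLT in $\RR^k$ (or equivalently, Cramér--Wold: for each $t \in \RR^k$ the scalar array $t \cdot Y_i$ is i.i.d., centered, with finite variance $t^{\mathsf T}\tangCOV\, t$, so $\frac1{\sqrt n}\sum_i t\cdot Y_i \overset{d}\to \mathcal N(0, t^{\mathsf T}\tangCOV\, t)$ by the one-dimensional Lindeberg--Lévy CLT) then gives $\xi_n \overset{d}\to \xi$, where $\xi$ is the centered Gaussian vector in $\RR^k$ with covariance matrix $\tangCOV_{ij}$. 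Finally, $\xi$ has exactly the law of $\bigl(G(V_1),\dots,G(V_k)\bigr)$ by the defining properties (1) and (2) of the Gaussian tangent field $G$ induced by $\mu$ in Definition~\ref{d:gaussian-tangent-field}: that finite tuple is multivariate Gaussian with covariance $\tangCOV(\mu,V_i,V_j)$, which pins down its distribution uniquely.

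There is essentially no obstacle here beyond bookkeeping; the only point requiring a word of care is verifying the finite second-moment hypothesis of the classical CLT, and that is handled by the localization of $\mu$ together with the $L^2$ estimate $\EE\tau(x,V)^2 \leq 4\,\EE d(\bmu,x)^2$ from Proposition~\ref{p:covEst}. (A minor typo to fix in the statement: $\xi_n \in \RR^m$ should read $\xi_n \in \RR^k$.) The substance of the convergence argument is the reduction to a finite-dimensional, genuinely linear setting, which is precisely the virtue of working with random tangent fields rather than with $\Tmu$-valued random variables; the harder, infinite-dimensional content---tightness in $\cC(\Smu,\RR)$---is not needed for this lemma and is deferred to the subsequent Lemmas~\ref{l:tight} and~\ref{l:functional-CLT}.
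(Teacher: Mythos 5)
Your proposal is correct and follows essentially the same route as the paper's own proof: both reduce the statement to the classical multivariate central limit theorem for the i.i.d.\ centered vectors $\bigl(g_i(V_1),\dots,g_i(V_k)\bigr)$ and identify the limit via the defining finite-dimensional law of the Gaussian tangent field. Your version merely spells out the details the paper leaves implicit (Cram\'er--Wold and the finite-second-moment check via Proposition~\ref{p:covEst}), and you are right that $\RR^m$ in the statement should read $\RR^k$.
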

\begin{proof}
The increments $\xi_n - \xi_{n-1}$ are independent, so the result
follows from the standard central limit theorem for independent random
variables on~$\RR^m$ after noting that the~$\xi_n$ share the same
covariance matrix~$\tangCOV$.
\end{proof}


The next lemma uses the notion of tightness (see
\cite[Section~5.1]{Bil13} for example): a family of measures on a
metric space~$S$ with the Borel $\sigma$-field~$\mathcal{S}$ is
\emph{tight} if for every $\ve > 0$
some compact set $K \subseteq S$
has $\nu(K) > 1 - \ve$ for every measure $\nu$ in the family.
The interest in tightness stems from the fact that by Prokhorov's
Theorem a tight sequence of probability measures is relatively compact
in distribution and hence must have a subsequence which converges in
distribution.

When considering convergence in distribution on a space of continuous
functions (such as $\cC(\Smu, \RR)$, where $\Gn$ lies by
Lemma~\ref{l:almost_sure_cont-of_Gn}), the Arzel\`a--Ascoli Theorem
can prove tightness by controlling the modulus of continuity.  If $h
\in \cC(\Smu, \RR)$ and
$$
  w(h,r) 
  =
  \sup\bigl\{|h(V)-h(U)|
             \,\bigm|\,
             U,V\in \Smu\text{ and }\dtan_\bmu(V, U) \leq r\bigr\},
$$
$$
  K(c_0,c_1,r_0)
  = \bigl\{h \in \cC(\Smu, \RR) \mid |h(V)| \leq c_0 \text{ for
    some } V \in \Smu \text{ and } w(h,r_0) \leq c_1 \bigr\}
$$
is relatively compact for any positive $c_0$, $c_1$, and~$r_0$.  By
construction, the collection of functions in $K(c_0,c_1,r_0)$ is
equicontinuous, so any limit point in the supremum topology is also
continuous.

\begin{lemma}\label{l:tight}
If $\MM$ is stratified then the sequence $\{\Gn\}$ is tight in
$\cC(\Smu, \RR)$.
\end{lemma}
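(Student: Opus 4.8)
The plan is to verify the standard tightness criterion for probability measures on $\cC(\Smu,\RR)$: by the Arzel\`a--Ascoli characterization of relative compactness recalled just above the statement, it suffices to show that for every $\eta > 0$ there exist constants $c_0, c_1$ and a radius $r_0 > 0$ such that $\bP\bigl(\Gn \notin K(c_0,c_1,r_0)\bigr) < \eta$ uniformly in $n$. Splitting the complement of $K(c_0,c_1,r_0)$ into the event $\{|\Gn(V_0)| > c_0\}$ for a fixed reference point $V_0 \in \Smu$ and the event $\{w(\Gn,r_0) > c_1\}$, the first is controlled by Corollary~\ref{c:C_pSup} (choosing $c_0$ large), so the whole issue is the modulus-of-continuity term $\bP\bigl(w(\Gn,r_0) > c_1\bigr)$.

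First I would fix a H\"older exponent $\gamma \in (0,1)$ and run a chaining argument on $\Gn$ exactly parallel to the proof of Theorem~\ref{t:kolmogorov-reg}: using the finiteness of the covering dimension $d$ from Lemma~\ref{l:d-is-finite} (this is where the stratified hypothesis enters), and the uniform moment bound $\EE|\Gn(V)-\Gn(U)|^{2p} \le C_{2p}(1+\gamma_{2p})\dtan_\bmu(U,V)^{2p}$ from Lemma~\ref{l:DiffMoments}, one builds nested $2^{-k}$-nets $D_k \subset \Smu$ with $|D_k| \le K 2^{dk}$ and defines the local oscillations $\xi_k = \sup\{|\Gn(V)-\Gn(U)| : U,V \in D_k,\ \dtan_\bmu(U,V) \le 2^{-k}\}$. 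Picking $2p$ large enough that $2p - 2p\gamma - d > 0$, the bound $\EE\xi_k^{2p} \le K 2^{-k(2p-d)}$ gives $\EE\sum_k \xi_k^{2p} 2^{2p\gamma k} \le C < \infty$ with $C$ \emph{independent of $n$}. Since a chaining inequality bounds the full modulus $w(\Gn, 2^{-m})$ by a constant times $\bigl(\sum_{k\ge m}\xi_k^{2p}\bigr)^{1/2p}$, and hence by a random constant times $2^{-m\gamma}$, one gets $\EE\bigl[(K_n)^{2p}\bigr] \le C$ uniformly in $n$, where $K_n$ is the smallest constant with $w(\Gn,r) \le K_n r^{\gamma}$ for all dyadic $r$; a Markov inequality then makes $\bP(K_n > c_1/r_0^{\gamma})$ small, uniformly in $n$, by taking $c_1$ large. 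Combining the two estimates yields the tightness criterion.

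The main obstacle is keeping all constants uniform in $n$: the chaining bound from Theorem~\ref{t:kolmogorov-reg} was stated for a single field, so I must re-examine it to confirm that the constant $C$ controlling $\EE\sum_k \xi_k^{2p}2^{2p\gamma k}$ depends only on $d$, $\gamma$, $p$, and on $\sup_n(1+\gamma_{2p})$ --- which is finite and $n$-independent since each $x_i \sim \mu$, so $\gamma_{2p} = \EE\|X_1\|^{2p}$ is fixed (finite because $\mu$ is localized, or can be truncated; one uses that $\EE d(\bmu,x)^{2p}$ is controlled for the relevant $p$). A secondary point is that the oscillation $\xi_k$ is defined over the net $D_k$ rather than over $\Smu$, so the passage from $\sup$ over $\bigcup_k D_k$ to $w(\Gn,r)$ over all of $\Smu$ uses the a.s.\ continuity of $\Gn$ from Lemma~\ref{l:almost_sure_cont-of_Gn} together with density of $\bigcup_k D_k$; this is routine but should be stated. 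Once these uniformity checks are in place, Prokhorov's theorem (as recalled before the statement) finishes the argument.
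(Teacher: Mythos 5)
Your plan is essentially the paper's own proof: the same chaining over the nets $D_k$ built from Lemma~\ref{l:d-is-finite}, the same use of the moment bounds of Lemma~\ref{l:DiffMoments} to control $\EE\,\xi_{n,k}^a \leq K2^{-k(a-d)}$ uniformly in~$n$, and the same passage from the nets to the full modulus of continuity via the a.s.\ continuity of $\Gn$, concluding through the Arzel\`a--Ascoli/Prokhorov criterion. The only cosmetic differences are that the paper bounds $\EE\bigl[w(\Gn,2^{-m})^a\bigr]$ directly (rather than introducing a random H\"older constant and applying Markov) and leaves the pointwise bound at a reference $V_0$ implicit, which you handle explicitly via Corollary~\ref{c:C_pSup}.
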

\begin{proof}
The arguments are
similar to
\cite[Theorem~16.9]{kallenberg1997} and more generally
\cite{talagrand-2014}.  Fix $\MM$ and the collection
$\{D_n\}_{n=M}^\infty$ of covering ball centers as
for Theorem~\ref{t:kolmogorov-reg}, and~set
\begin{equation*}
  \xi_{n,k}
  =
  \sup\bigl\{|\Gn(V) - \Gn(U)|
             \,\bigm|\, 
             U,V \in D_k\text{ and }\dtan_\bmu (U, V) \leq 2^{-k}\bigr\}. 
\end{equation*}
For any even integer  $a > \max(d,1)$ where $d$ is the constant from \eqref{eq:dDef2}, observe that
\begin{align*}
   \xi_{n,k}^a \leq \sum_{ \substack{U,V \in D_k \\ \dtan_\bmu (U, V) \leq
  2^{-k} }} |\Gn(V) - \Gn(U)|^a
\end{align*}
and, by Lemma~\ref{l:DiffMoments}, $\EE\:|\Gn(V) - \Gn(U)|^a \leq C_a
\dtan_\bmu(U, V)^a$ for some positive constant $C_a$ independent of
$U$ and~$V$.  Since the set $\bigl\{U,V \in D_k \,\bigm|\, \dtan_\bmu
(U, V) \leq 2^{-k}\bigr\}$ has size $\leq K 2^{dk}$,
\begin{align}\label{eq:xiBound}
  \EE\:\xi_{n,k}^a \leq K2^{-ka}2^{dk} = K 2^{-k(a-d)}
\end{align}
for some possibly different constant $K$.  Next,
given any $U,V \in \Smu$ that satisfies $\dtan_\bmu (U, V) \leq
2^{-m}$ there exist two sequences $\{U_j\}_{j=m}^\infty$ and
$\{V_j\}_{j=m}^\infty$ with $U_k,V_k \in D_k$ and $\lim U_k = U$ and
$\lim_k V_k =V$.  Then
\begin{align*}
  |\Gn(U) -\Gn(V)|
  \leq
  |\Gn(U_m) -\Gn(V_m)| &+ \sum_{k=m}^\infty |\Gn(U_m) -\Gn(U_{m+1})|
\\                     &+ \sum_{k=m}^\infty |\Gn(V_m) -\Gn(V_{m+1})|
\end{align*}
since $\Gn$ is continuous.  This in turn implies that $|\Gn(U)
-\Gn(V)| \leq C \sum_{k \geq m} \xi_{n,k}$ for some universal positive
$C$, and hence $w(h,2^{-m}) \leq C \sum_{k \geq m} \xi_{n,k}$.  Since
$a > 1$, applying the triangle inequality to this estimate and then
\eqref{eq:xiBound} produces
\begin{align*}
  \EE\bigl(w(\Gn, 2^{-m})^a\bigr)
  \leq
  \Bigl(C \sum_{k \geq m} \bigl(\EE \xi_{n,k}^a\bigr)^{\frac 1a}\Bigr)^a
  \leq
  C \Bigl(\sum_{k \geq m} 2^{-k\bigl(1-\frac da\bigr)}\Bigr)^a
  \leq
  C 2^{-m(a - d)}.
\end{align*}
Since $a - d > 0$, this implies that
$$
  \lim_{m\to\infty} \limsup_{n\to\infty} \EE\bigl[w(\Gn, 2^{-m})\wedge 1\bigr] = 0,
$$
which shows the tightness of~$\{\Gn\}$.
\end{proof}

\begin{lemma}\label{l:functional-CLT}
If $\MM$ is stratified then the sequence $\Gn$ converges in
distribution to $G$.
\end{lemma}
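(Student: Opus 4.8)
The plan is to invoke the standard two-ingredient criterion for weak convergence on the Polish space $\cC(\Smu,\RR)$: convergence of finite-dimensional distributions together with tightness, both of which are already established. First I would record the functional-analytic setup. Since $\Smu$ is the completion of the space of directions of a complete locally compact $\CAT(\kappa)$ space, it is a compact metric space, so $\cC(\Smu,\RR)$ with the supremum norm is a separable Banach space; by Lemma~\ref{l:almost_sure_cont-of_Gn} each $\Gn$ takes values in $\cC(\Smu,\RR)$ almost surely, so the laws of the $\Gn$ are Borel probability measures there, and by Corollary~\ref{c:regGaussian} the Gaussian tangent field $G$ induced by~$\mu$ admits a version in $\cC(\Smu,\RR)$ (indeed Hölder continuous), so its law is also a Borel probability measure on this space.

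Next I would use tightness to extract subsequential limits and the finite-dimensional convergence to pin them down. By Lemma~\ref{l:tight} the sequence $\{\Gn\}$ is tight in $\cC(\Smu,\RR)$, so by Prokhorov's theorem it is relatively compact in distribution: every subsequence has a further subsequence converging in distribution to some Borel probability law $\nu$ on $\cC(\Smu,\RR)$. For any $V_1,\dots,V_k \in \Smu$ the evaluation map $h \mapsto (h(V_1),\dots,h(V_k))$ is continuous on $\cC(\Smu,\RR)$, so convergence along the subsequence forces the image of $\nu$ under this map to be the weak limit of the corresponding finite-dimensional distributions of $\Gn$, which by Lemma~\ref{l:finiteDim} is the centered Gaussian with covariance matrix $\bigl(\tangCOV(V_i,V_j)\bigr)_{1\le i,j\le k}$, i.e. exactly the finite-dimensional distribution of $G$. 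Since $\Smu$ is separable, the Borel $\sigma$-algebra of $\cC(\Smu,\RR)$ is generated by the countable family of evaluation maps at a dense subset of $\Smu$, so a Borel probability measure on $\cC(\Smu,\RR)$ is determined by its finite-dimensional marginals; hence $\nu$ equals the law of $G$.

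Finally, every subsequential limit in distribution of $\{\Gn\}$ coincides with the law of $G$, and $\{\Gn\}$ is relatively compact in distribution, so the whole sequence $\Gn$ converges in distribution to $G$, as claimed. The only mildly delicate points are that $\cC(\Smu,\RR)$ is a Polish space, that the limiting field $G$ is genuinely supported there, and that finite-dimensional distributions determine a law on $\cC(\Smu,\RR)$; these are handled respectively by compactness of $\Smu$, by Corollary~\ref{c:regGaussian}, and by separability of $\Smu$. None of these is a serious obstacle, the substantive analytic work having already been carried out in Lemmas~\ref{l:finiteDim} and~\ref{l:tight}; this lemma is essentially the assembly step.
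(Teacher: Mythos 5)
Your proposal is correct and follows essentially the same route as the paper: tightness (Lemma~\ref{l:tight}) plus Prokhorov's theorem to get subsequential limits in $\cC(\Smu,\RR)$, identification of each limit with $G$ via the finite-dimensional convergence of Lemma~\ref{l:finiteDim} and the fact that continuous-path laws are determined by their finite-dimensional marginals, and then the subsequence criterion for convergence in distribution. The only difference is that you spell out a bit more explicitly why finite-dimensional distributions determine the law on $\cC(\Smu,\RR)$ (countable evaluation maps generating the Borel $\sigma$-algebra) and why $G$ lives in $\cC(\Smu,\RR)$ (Corollary~\ref{c:regGaussian}), points the paper handles more briefly.
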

\begin{proof}
The result follows from \cite[Theorem~7.5]{Bil13} or
\cite[Theorem~16.5]{kallenberg1997} in the setting of continuous
functions on~$\RR^d$.  We sketch the proof for completeness and to
translate it to our setting on a stratified space~$\MM$ rather
than~$\RR^d$.  The result follows from the classical result that $\Gn$
converges in distribution to $G$ if and only if every subsequence in
$\{\wh G_k = G_{n_k}\}_k$ contains a further subsequence which $\{\wh
G_{k_j}\}$ which converges in distribution to~$G$ (see
\cite[Theorem~2.6]{Bil13}, for example).  Since the $\{\Gn\}$ are
tight by Lemma~\ref{l:tight}, Prokhorov's Theorem in the setting of
continuous functions \cite[Theorem~7.1]{Bil13} implies that any
subsequence $\{\wh G_k = G_{n_k}\}_k$ contains a further subsequence
$\{\wh G_{k_j}\}$ which converges in distribution to a limit which we
denote by $\wh G \in \cC(\Smu, \RR)$.  Since $\wh G$ is the uniform
limit of continuous functions it is also continuous.  Since both $\wh
G$ and~$G$ are continuous, they are determined completely by their
finite-dimensional distributions.  However, since $\bigl(\wh
G_{k_j}(V_1), \dots, \wh G_{k_j}(V_n)\bigr)$ converges to
$\bigl(G(V_1),\dots, G(V_n)\bigr)$ for any collection $V_1,\dots,V_n
\in \Smu$, it must be that $G$ and $\wh G$ have the same distribution.
\end{proof}

\begin{proof}[Proof of Corollary~\ref{c:cont-realization}]
$\cC(\Smu,\RR)$ is a complete separable metric space when endowed with
the supremum norm.  Convergence then follows from
\cite[Theorem~3.30]{kallenberg1997}.  As each $\Gn$ is continuous and
$G$ is their uniform limit, $G$ is almost-sure continuous.
\end{proof}



\vspace{-1ex}


\end{document}